\begin{document}
 \title{Qualitative Behavior of Solutions to a  Forced Nonlocal Thin-Film Equation\thanks{Received date, and accepted date (The correct dates will be entered by the editor).}}


\author{Jinhong~Zhao\thanks{School of Mathematics, Jilin University, Changchun, 
		Jilin Province 130012, China, (jhzhao23@mails.jlu.edu.cn).}
	\and Bin~Guo\thanks{School of Mathematics, Jilin University, Changchun, 
		Jilin Province 130012, China, (bguo@jlu.edu.cn).}}

         \pagestyle{myheadings} \markboth{Forced Nonlocal Thin-Film Behavior}{Jinhong~Zhao  and Bin~Guo} \maketitle

          \begin{abstract}
               We study a one-dimensional nonlocal degenerate fourth-order parabolic equation with inhomogeneous forces relevant to hydraulic fracture modeling. Employing a regularization scheme, modified energy/entropy methods, and novel differential inequality techniques, we establish global existence and long-time behavior results for weak solutions under both time-and space-dependent and time-and space-independent  inhomogeneous forces. Specifically, for the time-and space-dependent force $S(t, x)$, we prove that the  solution converges  to $\bar{u}_0+\frac{1}{|\Omega|}\int_0^\infty \int_\Omega S(r, x)\, dxdr $,  where $\bar{u}_0=\frac{1}{|\Omega|}\int_{\Omega}u_{0}(x)\,dx$ is the spatial average of the initial data, and we provide bilateral estimates for the convergence rate. 
               For the time-and space-independent force $S_0$, we show that the solution approaches the linear function $\bar{u}_0 + tS_0$ at an exponential rate.
          \end{abstract}
\begin{keywords}  Nonlocal parabolic equation; Inhomogeneous forces; Global existence; Long-time behavior
\end{keywords}

 \begin{AMS} 35R11; 35K35; 35B40
\end{AMS}

\section{Introduction}

In this paper, we study a nonlocal degenerate parabolic equation with inhomogeneous forces
\begin{equation}\label{1.1}
\begin{cases}
\partial_t u + \partial_x\left(u^n \partial_x I(u) \right) = S(t, x),~~&(t,x) \in  (0,T) \times \Omega,\\
\partial _x u =u^n \partial_x I(u) = 0,~~&(t,x) \in (0,T) \times \partial\Omega,\\
u(0, x)=u_{0}(x),~~&x \in \Omega,
\end{cases}
\end{equation}
where  	$ \Omega=(a,b)$ is a bounded interval on the real line, $ T > 0 $ is a given time, and  $n>0$ is the diffusion growth exponent.  The operator $I=-\left( -\Delta \right)^s $ is a nonlocal negative  elliptic  operator of order $2s$, with $s \in (0, 1)$, which is defined precisely in section 2.
$u(t,x)$ denotes the fracture aperture, and $S(t,x)$ represents the net volume increase rate of fluid per unit time per unit fracture length.
Problem \eqref{1.1} describes the dynamic evolution of the fracture aperture, where the propagation is driven by the pressure exerted by the viscous fluid that fills the fracture.

Hydraulic fracturing, a critical technique in the global energy landscape, enhances reservoir permeability by creating artificial fractures via high-pressure fluid injection into subsurface rock ($S(t,x)>0$). As cracks initiate and propagate during this process, studying their long-time behavior offers essential theoretical insights for technology optimization.

For $s=1$, $S(t,x)=0,$ the first equation in problem \eqref{1.1} becomes the  classical thin-film equation
\begin{align}\label{111}
\partial_t u + \partial_x\left(u^n  \partial_{xxx}^3 u\right) = 0,
\end{align}
whose study dates back to the work of Bernis and Friedman \cite{FBAF} in 1990. They first obtained the existence of non-negative weak solutions to problem \eqref{111} subject to  Neumann boundary conditions $u_x = u_{xxx} = 0$ in one dimension, and investigated the regularity of solutions as well as the monotonicity of supports. 
Moreover, Beretta, Bertsch, and Dal Passo \cite{BBDP} and Bertozzi and Pugh \cite{BP} studied problem \eqref{111} in the one-dimensional case, with a focus on the relationship between the qualitative behavior of solutions and the diffusion growth exponent $n$. 
%
Subsequently, Bernis \cite{B2,B3} showed that for $0 < n < 3$, solutions to \eqref{111} have the properties of finite speed of propagation and the waiting time phenomenon.
%
After these pioneer works, there are many research activities regarding different slip conditions on the fluid-solid interface.
For instance,  "strong slippage" ($n \in (1, 2)$), "weak slippage" ($n \in (2, 3)$), "no-slip condition"($n = 3$) \cite{AOS}, "Navier-slip condition" ($n = 2 $) \cite{WJA} and  Hele-Shaw flow ($n = 1$) \cite{LGF}.

Regarding existence results in higher dimensions, Dal Passo, Garcke, and Gr${\rm \ddot{u}}$n \cite{DPGG} improved the entropy estimate and extended the results of either \cite{BBDP} or \cite{BP} to the higher-dimensional case. 
Specifically, their results hold for exponents  $n > \frac{1}{8} $ in the two-dimensional case and  $\frac{1}{8} < n < 4$ in three dimensions. 
Subsequently, Li \cite{L1,L2}  established the existence, positivity, and further properties of solutions to problem \eqref{111} separately in four-dimensional space and in dimensions five and above.

For the convergence analysis, Carlen and Ulusoy \cite{EAS1,EAS2} derived an explicit decay rate in the $H^1(\Omega)$-norm for classical solutions to \eqref{111}, extending the earlier $L^1$ and $L^\infty$-norm results from \cite{BBDP, BP, JAG, LJS}.
Tudorascu \cite{TAL} further demonstrated  the exponential decay of weak solutions in $H^1(\Omega)$. 
More recently, Chugunova, Ruan and Taranets \cite{CMRY} first studied the long-time behavior of non-negative solutions to \eqref{111} driven by time-independent inhomogeneous forces $S(x)$ and $S_0$.
Additionally, Slep${\rm \check{c}}$ev \cite{DSL} considered the existence and qualitative properties of self-similar solutions.

Recent studies indicate that degenerate parabolic equations with nonlocal operators outperform classical thin-film models in hydraulic fracturing simulations, resulting in the following order $2s+2$ nonlocal equation
\begin{equation}\label{222}
\partial_t u + \partial_x\left(u^n \partial_x I(u) \right) = 0,
\end{equation}
where $I=-\left( -\Delta \right)^s$ and the parameter $s \in (0,1)$ is related to the properties of the medium in which the fracture spreads.
For  $s=0$,  \eqref{222} reduces to the Porous Medium Equation. When
$s = 1$, it coincides with the Thin-Film Equation. For the critical case
$s = \frac{1}{2}$, Imbert and Mellet \cite{CIAM} first established the existence of non-negative solutions to problem \eqref{222}. Tarhini \cite{RT} later extended this existence result to all $s \in (0,1)$.  Furthermore, De Nitti and Taranets \cite{NDRMT} employed a localized entropy estimate and a Stampacchia-type lemma to prove finite speed of propagation and derive sufficient conditions for the waiting-time phenomenon in one dimension.
Subsequently, De Nitti, Lisini, Segatti, and Taranets \cite{NDSL} generalized these results to bounded domains in  $\mathbb{R}^d$.

Motivated by aforementioned works, we aim to study the existence and long-time behavior of non-negative solutions to problem \eqref{1.1}. In particular, we aim to
discuss the dependence of the asymptotic behavior of solutions on inhomogeneous forces. 
To the best of our knowledge, such problem has not been rigorously studied previously.  
Indeed, in dealing with such problems, we have to face some challenges including the lack of $L^{1}$ mass conservation and  loss of the monotonicity of the energy functional.
Consequently, we develop novel techniques and employ energy/entropy methods to resolve these issues.
Notably, the main novelties of this work include:

$\bullet$ The nonlocal nature of the operator $I$ poses challenges for analyzing the dissipation  functional  $D(u)(t)=\int_{\Omega} |u|^{n} \left| \partial_{x} I(u)\right| ^{2}  dx$. Consequently, existing techniques are  not immediately available.
To address this challenge, by analyzing the sign of $I(u)$ at the critical  point of the solution $u$, we establish a new weighted interpolation inequality (Lemma \ref{2007lem}), which enables us to derive a relationship between the dissipation  functional  and the  energy functional  $J(u)(t)=\int_{\Omega} |(-\Delta)^{\frac{s}{2}} u|^2 \,dx$.


$\bullet$ Another pivotal step lies in the derivation -- via a novel approach -- of bilateral bounds for solutions to a non-autonomous differential inequality (Lemma \ref{inequ}). 
By further combining weighted interpolation inequality, energy estimates, and entropy estimates, we construct a differential inequality for the energy functional, thereby obtaining bilateral estimates  of the convergence rate. 
Specifically, as time tends to infinity, the solution converges to the average value (Theorem \ref{ex1})
$$\bar{u}_0+\frac{1}{|\Omega|}\int_0^\infty \int_\Omega S(r, x)\, dxdr.$$

$\bullet$ As a byproduct, we obtain the local $L^1$-in-time estimate for the dissipation functional $D(u)(t)$  as $t\to\infty$, and provide an explicit convergence rate.

Before stating our main results, we first give the definition of the weak solution to problem \eqref{1.1}.
\begin{definition}\label{def.eq2}
	\rm Let $n \ge 1$, $s \in (0,1)$, $ u_0(x) \in  H^s(\Omega)$ and $S(t, x) \in  L^1(0, \infty;H^s(\Omega) )$.
	We say that a function
	$$u(t,x) \in L^{\infty}(0,T;H^s(\Omega) ) \cap  L^2(0,T;H^{s+1}(\Omega) )$$
	is a solution  to problem \eqref{1.1} in $[0,T] \times \Omega$ if $u$ satisfies
	\begin{align}\label{ex1.defi}
	\iint_{\Omega_T} u \partial_t \varphi \,dx dt
	- \iint_{\Omega_T} nu^{n-1} \partial_x u I(u) &\partial_x \varphi \,dx dt
	- \iint_{\Omega_T} u^{n} I(u) \partial_{xx}^2 \varphi \,dx dt\nonumber\\
	&=  -\int_{\Omega}u_0 \varphi(0,\cdot)\,dx - \iint_{\Omega_T} S(t,x) \varphi \,dx dt,
	\end{align}
	for all $\varphi \in  \mathcal{D}([0, T ) \times \bar{\Omega} ) $ satisfying $\partial_x \varphi = 0$ on $(0, T) \times \partial \Omega$. Here, the definition of  $\mathcal{D}$ will be found in Subsection 2.4.
	Moreover, we say that $u$ is a global-in-time solution to problem \eqref{1.1} if $u$ is a solution to problem \eqref{1.1} in $[0, T'] \times \Omega$ for all $T' > 0$.
	
\end{definition}


For the simplicity of our statement, we define the entropy function $ G : \mathbb{R} \rightarrow [0, +\infty) $  satisfies for any $z > 0 $,
\begin{align*}
G(z) = \int_A^z \int_A^r \frac{1}{t^n} \, dt \, dr,
\end{align*}
where $A>0$, and  $G(z) = +\infty $ for any $ z < 0 $, $ G(0) = \lim\limits_{z \to 0^+} G(z) $. 
Next, for the time-and space-dependent forces $S(t,x)$, the main results concerning the global existence and long-time behavior of weak solutions to problem \eqref{1.1} are stated as follows.

\begin{theorem}\label{ex1}
	{\rm (Time-and space-dependent forces $S(t, x)$)}
	Let $n \ge 1$, $s \in (\frac{1}{2},1)$.  Assume that $0 \le u_0(x) \in  H^s(\Omega)$ and $0 \le S(t, x) \in  L^1(0, \infty;H^s(\Omega) ) \cap  L^1(0, \infty;L^1(\Omega) )$ satisfying
	\begin{equation*}
	\int_{\Omega}G(u_0)\,dx < \infty.
	\end{equation*}
	Then for every fixed $T>0$, problem \eqref{1.1} has a global non-negative weak solution $u$   in the sense of  Definition \ref{def.eq2}.
	
	Meanwhile,  $u$ satisfies for almost every $t\in (0, T)$,
	\begin{equation}\label{ex1.mass}
	\int_{\Omega} u(t,x) \,dx = \int_{\Omega} u_0(x) \,dx + \int_0^t \int_{\Omega} S(r, x)  \,dxdr,
	\end{equation}
	and
	\begin{equation}\label{ex1.energy}
	[u]^2_{s,\Omega}+ 2\int_{0}^{T}\int_{\Omega} g^2 \,dx dt
	\le [u_0]^2_{s,\Omega} - 2\int_{0}^{T}\int_{\Omega} S(t, x)I(u) \,dx dt,
	\end{equation}
	where the function $g \in L^2(\Omega_T)$ satisfies $g= \partial_x( u^{\frac{n}{2}} I(u)) - \frac{n}{2} u^{\frac{n-2}{2}} \partial_x u I(u)$ in $\mathcal{D}'(\Omega)$, and
	\begin{equation}\label{ex1.entropy}
	\int_{\Omega} G(u)\, dx + \int_{0}^{t} [u]^2_{s+1,\Omega,N}\, dr
	\le \int_{\Omega} G(u_0)\, dx.
	\end{equation}

	Moreover, there exists a constant	$C > 0$ such that  the global solution $u$ has the following asymptotic behavior	
	\begin{align}\label{ex3.uK1}
	[u_0]_{s,\Omega}  \left( 1+ C [u_0]_{s,\Omega}^2t \right)^{-\frac{1}{2}}
	\le 	&\left\| u - \bar{u}_0 - \frac{1}{|\Omega|} \int_{0}^\infty \int_{\Omega}  S(r,x) \, dx dr\right\|_{H^s(\Omega)} \nonumber\\
	\le  & \; C M_0\left( 1+C(1-\kappa)M_0^2 t \right) ^{-\frac{1}{2}}
	+ C \int_{\kappa t}^t [S(r,x)]_{s,\Omega}\, dr \notag\\
	&+ |\Omega|^{-\frac{1}{2}}\int_{t}^\infty \int_{\Omega}  S(r,x) \, dx  dr ,    ~~~\forall t>0,
	\end{align}
	where  $\kappa\in(0,1),$ $0<M_0:=[u_0]_{s,\Omega} + \int_0^\infty [S(r,x)]_{s,\Omega}\, dr <+\infty.$ 
\end{theorem}

\begin{remark}
{\rm  Let $S(t,x) = (1+t)^{-\beta}a(x)$ with $\beta>1$, where the condition $\beta >1$ ensures the time-integrability of $S(t,x)$. According to \eqref{ex3.uK1}, when $1<\beta < \frac{3}{2}$, the convergence rate of the solution is governed by the inhomogeneous force; when $\beta \ge  \frac{3}{2}$,  the convergence rate is dominated by the dissipative term instead.
In the latter case, the solution converges to $\bar{u}_0+ \frac{1}{|\Omega|} \int_{0}^\infty \int_{\Omega} S(r,x) \,dx dr$ in the $H^s(\Omega)$-norm at the polynomial rate $O(t^{-1/2})$,    
and we show that this convergence rate is almost optimal.}
\end{remark}


\begin{remark}
	{\rm For the time-independent but space-dependent force $S(x)$, with the aid of Lemma \ref{inequ1}, we prove that the difference between the weak solution and the linear function $\bar{u}_0 + \frac{t}{|\Omega|}\int_\Omega S(x)\, dx$ remains uniformly bounded in $H^s (\Omega )$,  that is, there exists a constant $C > 0$  such that for all $t  >0$,
		\begin{align}\label{uK}
		&\left\| u -  \bar{u}_0  -  \frac{t}{|\Omega|} \int_\Omega S(x)\, dx\right\| _{{H}^s(\Omega)} \nonumber\\
		\le & \begin{cases}
		CR, & \text{if } [u_0]_{s,\Omega} \le R,\\
		CR+C\left( \left([u_0]_{s,\Omega} -R \right) ^{-2}+\frac{1}{2|\Omega|^2H_0}t \right)^{-\frac{1}{2}}, & \text{if }[u_0]_{s,\Omega} > R,
		\end{cases}
		\end{align}
		where $H_0=\int_{\Omega}G(u_0)\,dx <\infty$ and $R=\left(4|\Omega|^2H_0 [S(x)]_{s,\Omega}  \right) ^{\frac{1}{3}}$.
		
		By comparing \eqref{ex3.uK1} and \eqref{uK}, it is readily apparent that, owing to the time-integrability of the force, one can obtain sharper decay estimates for the solution.  }
\end{remark}

Finally, when the inhomogeneous force is the time-and space-independent force $S_0$, we may obtain an explicit decay rate. Namely,
\begin{theorem}\label{ex2}
	{\rm(}Time-and space-independent forces  $S_0${\rm)}
	Let $S(x) = S_0 \ge 0$, and $ u $ be a weak solution from Theorem \ref{ex1}. Then there exists constants $C > 0$ such that
	$$\| u-\bar{u}_0-tS_0 \|_{H^s(\Omega)}
	\le C \| u_{0} \|_{{H}^s(\Omega)} e^{-C\int_{T_0}^{t} \left(\bar{u}_0  - C(\Omega)[u_{0}]_{s,\Omega} +  S_0r \right) ^n  \, dr},~~~\forall t \geq T_0,$$
	where  $T_0= \frac{\left[C(\Omega)[u_{0}]_{s,\Omega}- \bar{u}_0  \right] _+}{S_0}$.
\end{theorem}

{\bf Outline of the paper. }
The structure of our paper is as follows:
In Section 2, we introduce some preliminary knowledge.  
Section 3 presents several key lemmas for analyzing the long-time behavior of solutions.  
In Sections  4 and 5, we obtain the global existence and long-time behavior of non-negative weak solutions to problem \eqref{1.1} for two distinct types of inhomogeneous forces, respectively.
Section 6 is devoted to concluding comments and future research directions.

\section{Preliminaries}
In this section,  we will give the related concepts of fractional Sobolev spaces and fractional Laplacian operator as well as  key  integral inequalities and some notations.
\subsection{Function spaces}
In this subsection,   we first define the function space
$$ H^s_N(\Omega) = \left\{ u = \sum_{k=0}^{\infty} c_k \varphi_k \ : \ \sum_{k=0}^{\infty} c_k^2(1 + \lambda_k^s) < +\infty \right\},  $$
where $\{\lambda_k, \varphi_k\}_{k \geq 0}$ are the eigenvalues and corresponding eigenfunctions of the Laplacian operator in $\Omega$ with Neumann boundary conditions on $\partial \Omega$:
\begin{equation*}
\begin{cases}
-\Delta \varphi_k = \lambda_k \varphi_k, & \text{in } \Omega, \\
\nabla \varphi_k \cdot \vec{n} = 0, & \text{on } \partial \Omega, \\
\|\varphi_k\|_2 = 1,
\end{cases}
\end{equation*}
with the norm
\begin{equation*}
\| u \|^2_{H^s_N(\Omega)} = \sum_{k=0}^{\infty} c_k^2(1 + \lambda_k^s) = \|u\|^2_{2} + [u]^2_{s,\Omega,N},
\end{equation*}
which is equivalent to the following norm
\begin{equation*}
\| u \|^2_{H^s_N(\Omega)} = \left( \int_{\Omega} u \, dx \right)^2 + [u]^2_{s,\Omega,N},
\end{equation*}
where the homogeneous norm is given by
\begin{equation*}
[u]_{s,\Omega,N}^2 = \sum_{k=0}^{\infty} c_k^2 \lambda_k^s.
\end{equation*}

Noticing that $ [u]^2_{0,\Omega,N} = \|u\|^2_2$.
From \cite{MSBA}, the space $ H^s_N(\Omega) $ coincides with the classical fractional Sobolev space $ H^s(\Omega) $ for $ s \in (0, 3/2) $. In the case $ s \in (3/2, 7/2) $, we have $ H^s_N(\Omega) := \{ u \in H^s(\Omega) : \nabla u \cdot \vec{n} = 0 \text{ on } \partial \Omega \} $. In any case, we have equality of the norms
$\|u\|_{H^s_N(\Omega)} = \|u\|_{H^s(\Omega)}~\text{for any } u \in H^s_N(\Omega). $ For more properties about these spaces,  the interested reader refers to \cite{EDNG, CIAM, RT}.

\subsection{Fractional Laplacian operator}
This subsection is devoted to stating the definition of the nonlocal operator and its corresponding properties.  Now, we define
the operator $I=-\left( -\Delta \right)^s $ with $s \in (0, 1)$
as follows:
$$I: \sum_{k=0}^{+\infty} c_k \varphi_k \longrightarrow -\sum_{k=0}^{+\infty} c_k \lambda_k^{s} \varphi_k  \text{~~which~maps~~} H^{2s}_N(\Omega) \text{~~onto~~} L^2(\Omega).$$
Without any proof, we give several key properties regarding the nonlocal operator $I.$
First, the operator $I$ can  be represented as a singular integral operator.
\begin{proposition}$^{\text{\cite{RT}}}$\label{qiyi}
	Consider a smooth function $ u\colon \Omega \to \mathbb{R} $. Then for all $ x \in \Omega $,
	\begin{equation}\label{addnewfor}
	I(u)(x) = \int_{\Omega} \bigl(u(y) - u(x)\bigr) K(x, y)\, dy,
	\end{equation}
	where $ K(x, y) $ is defined as follows. For all $ x, y \in \Omega, $
	\begin{equation*}
	K(x, y) = c_{s} \sum_{k \in \mathbb{Z}} \left( \frac{1}{|x - y - 2k|^{1+2s}} + \frac{1}{|x + y - 2k|^{1+2s}} \right),
	\end{equation*}
	where $ c_{s} $ is a constant depending only on $ s $.
\end{proposition}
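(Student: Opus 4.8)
The plan is to realize the spectrally defined operator $I$ as the restriction to $\Omega$ of the translation-invariant fractional Laplacian on the whole line, acting on a suitable reflected and periodized extension of $u$, and then to \emph{fold} the resulting integral over $\mathbb{R}$ back onto $\Omega$, collecting the reflected and translated copies into the kernel $K(x,y)$. Since every bounded interval reduces to a normalized one by an affine change of variables (under which both sides of \eqref{addnewfor} transform consistently, the factor $c_s$ absorbing the dilation), it suffices to treat $\Omega=(0,1)$. Its Neumann eigenpairs are $\varphi_0\equiv 1$, $\varphi_k(x)=\sqrt{2}\cos(k\pi x)$ and $\lambda_k=(k\pi)^2$ for $k\ge 1$, and the period $2$ appearing in the statement is exactly the one generated by reflecting across the two endpoints $x=0$ and $x=1$.

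First I would extend $u=\sum_k c_k\varphi_k$ to an even, $2$-periodic function $\tilde u$ on $\mathbb{R}$ by reflecting across the endpoints. Because each $\varphi_k$ is already even and $2$-periodic when viewed on the line, the extension acts coefficientwise: $\tilde u=\sum_k c_k\tilde\varphi_k$ with $\tilde\varphi_k(x)=\sqrt{2}\cos(k\pi x)$ (and $\tilde\varphi_0\equiv1$). The crucial identity is that the line operator, understood through its Fourier symbol $|\xi|^{2s}$ equivalently its singular integral with the same normalization $c_s$, diagonalizes the trigonometric exponentials, whence $(-\Delta)^s\tilde\varphi_k=(k\pi)^{2s}\tilde\varphi_k=\lambda_k^s\varphi_k$ on $\Omega$. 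By linearity and the rapid decay of the coefficients $c_k$ for smooth $u$ (equivalently $u$ in the domain of $I$), this yields
\begin{equation*}
-(-\Delta)^s\tilde u\big|_{\Omega}=-\sum_{k}c_k\lambda_k^s\varphi_k=I(u),
\end{equation*}
so the spectral operator coincides with the line operator applied to the reflected extension. This step also explains the role of the boundary condition: even reflection is precisely what preserves the regularity of $\tilde u$ across the endpoints and matches the Neumann cosine basis.

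Next I would write the line operator as a singular integral and fold $\mathbb{R}$ onto the fundamental domain $(0,1)$. For $x\in(0,1)$,
\begin{equation*}
I(u)(x)=c_s\int_{\mathbb{R}}\frac{\tilde u(z)-\tilde u(x)}{|x-z|^{1+2s}}\,dz .
\end{equation*}
Up to a null set, every $z\in\mathbb{R}$ is of the form $y+2k$ or $2k-y$ for a unique $y\in(0,1)$ and $k\in\mathbb{Z}$; since $\tilde u$ is even and $2$-periodic, $\tilde u(z)=u(y)$ in either case, while the denominators become $|x-y-2k|^{1+2s}$ and $|x+y-2k|^{1+2s}$, respectively. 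Interchanging the sum over $k$ with the integral then gives exactly
\begin{equation*}
I(u)(x)=\int_0^1\bigl(u(y)-u(x)\bigr)\,c_s\sum_{k\in\mathbb{Z}}\left(\frac{1}{|x-y-2k|^{1+2s}}+\frac{1}{|x+y-2k|^{1+2s}}\right)dy,
\end{equation*}
which is the asserted representation \eqref{addnewfor}.

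The main obstacle is the analytic justification of the two interchanges of limit operations. Passing $(-\Delta)^s$ through the infinite cosine series requires controlling the tails via the decay of $c_k$, which is legitimate for smooth $u$. More delicate is interchanging the sum over $k$ with the integral while handling the diagonal singularity: for $k\neq0$ the summands are bounded by $C|k|^{-(1+2s)}$, hence summable uniformly in $x,y\in(0,1)$, whereas the single singular term ($k=0$, $y\to x$) is made integrable by the cancellation $u(y)-u(x)=O(|y-x|)$ together with $1+2s<2$, so that no genuine principal value survives once the factor $\bigl(u(y)-u(x)\bigr)$ is present. Establishing these uniform bounds and the absolute convergence of $K(x,y)$ off the diagonal is the technical heart of the argument; once they are in place, Fubini and dominated convergence render all the manipulations rigorous.
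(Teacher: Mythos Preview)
The paper does not prove this proposition; it is quoted from \cite{RT} and the authors explicitly say they state it ``without any proof.'' Your approach---extend $u$ evenly to a $2$-periodic function on $\mathbb{R}$, identify the spectral Neumann operator with the restriction of the whole-line $(-\Delta)^s$, then fold the singular integral back onto $\Omega$---is exactly the standard route and is in essence what is done in \cite{RT}. The identification $(-\Delta)^s\tilde\varphi_k=\lambda_k^s\varphi_k$ via the Fourier symbol, the decomposition of $\mathbb{R}$ into the images $y+2k$ and $2k-y$ of the fundamental domain, and the summability of the far-field pieces are all sound.

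There is, however, a genuine gap in your treatment of the diagonal singularity. You claim that the $k=0$ term is rendered absolutely integrable by $u(y)-u(x)=O(|y-x|)$ ``together with $1+2s<2$,'' and that therefore ``no genuine principal value survives.'' But $1+2s<2$ is the same as $s<\tfrac12$, so your argument only covers half of the stated range $s\in(0,1)$. For $s\in[\tfrac12,1)$ the integrand $O(|y-x|)\cdot|y-x|^{-(1+2s)}=O(|y-x|^{-2s})$ is \emph{not} absolutely integrable near $y=x$, and the principal value is genuinely required. The fix is standard: use a second-order expansion $u(y)-u(x)=u'(x)(y-x)+O(|y-x|^2)$; the quadratic remainder gives an $O(|y-x|^{1-2s})$ integrand, which is integrable for all $s<1$, while the linear term is handled as a principal value (it cancels symmetrically in the whole-line integral before folding, and the asymmetric remainder on $(0,1)$ is a finite boundary contribution). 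Once this is in place your Fubini/dominated-convergence justification goes through for the full range $s\in(0,1)$.
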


The next propositions establish the commutativity property of the operator $I$.
\begin{proposition}$^{\text{\cite{RT}}}$\label{forall}
	~\\
	\noindent{\rm 1.} For all $u \in H^s_N(\Omega)$, we have
	$[u]^2_{s,\Omega} = -\langle I(u), u \rangle_{L^2(\Omega)};$  \\
	\noindent{\rm 2.} For all $u \in H^{2s}_N(\Omega)$, we have
	$[u]^2_{2s,\Omega} = \int_\Omega (I(u))^2 \, dx;$\\
	\noindent{\rm 3.} For any $u \in H^{s+1}_N(\Omega)$, we have   $[u]^2_{s+1,\Omega} = -\int_{\Omega} \partial_x I(u) \partial_x u \, dx;$\\
	\noindent{\rm 4.} For any $u \in H^{2s+1}_N(\Omega)$, we have
	$[u]^2_{2s+1,\Omega} = \int_{\Omega} \left(\partial_x I(u)\right)^2 \, dx.$
\end{proposition}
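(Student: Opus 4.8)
The plan is to prove all four identities by Parseval-type spectral calculus in the Neumann eigenbasis $\{\varphi_k\}_{k\ge 0}$, which is an orthonormal basis of $L^2(\Omega)$ diagonalizing $I$. Write $u=\sum_{k\ge0}c_k\varphi_k$; by the definition of $I$ one has $I(u)=-\sum_{k\ge0}c_k\lambda_k^s\varphi_k$, and by the definition of the homogeneous norm, $[u]^2_{\alpha,\Omega}=[u]^2_{\alpha,\Omega,N}=\sum_{k\ge0}c_k^2\lambda_k^{\alpha}$ for each exponent $\alpha$ occurring below (the identification of the two seminorms is legitimate since $H^s_N(\Omega)=H^s(\Omega)$ with equal norms). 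Each identity then reduces to equating two spectral sums, so the strategy is uniform: first verify the identity for finite spectral truncations $u^{(N)}=\sum_{k=0}^N c_k\varphi_k$, where every integrand is smooth and all manipulations are elementary, and then pass to the limit $N\to\infty$, the relevant tails being summable precisely because of the membership hypothesis in each item.

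Items 1 and 2 follow immediately from $L^2$-orthonormality of $\{\varphi_k\}$. For item 1, pairing $I(u)$ with $u$ gives $-\langle I(u),u\rangle=\sum_{k\ge0}c_k^2\lambda_k^{s}=[u]^2_{s,\Omega}$; when $u\in H^s_N(\Omega)$ only, $I(u)$ need not lie in $L^2(\Omega)$, so the symbol $\langle I(u),u\rangle_{L^2(\Omega)}$ is read as the natural duality pairing (the Dirichlet form of $(-\Delta)^s$) extending the $L^2$ inner product, whose defining spectral expression is the sum above. For item 2 the stronger hypothesis $u\in H^{2s}_N(\Omega)$ forces $\sum_k c_k^2\lambda_k^{2s}<\infty$, so $I(u)\in L^2(\Omega)$ genuinely and $\int_\Omega(I(u))^2\,dx=\|I(u)\|_2^2=\sum_{k\ge0}c_k^2\lambda_k^{2s}=[u]^2_{2s,\Omega}$.

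For the two differentiated identities the key ingredient is the gradient orthogonality relation
\begin{equation*}
\int_\Omega \partial_x\varphi_k\,\partial_x\varphi_j\,dx=\lambda_k\,\delta_{kj}.
\end{equation*}
This is obtained by integrating by parts: the boundary term vanishes because every eigenfunction satisfies the Neumann condition $\partial_x\varphi_j=0$ on $\partial\Omega$, and then $-\partial_{xx}\varphi_j=\lambda_j\varphi_j$ together with $\langle\varphi_k,\varphi_j\rangle_{L^2}=\delta_{kj}$ yields the relation. Substituting $\partial_x u=\sum_j c_j\partial_x\varphi_j$ and $\partial_x I(u)=-\sum_k c_k\lambda_k^{s}\partial_x\varphi_k$ and applying this relation termwise on the truncations gives, for item 3, $-\int_\Omega\partial_x I(u)\,\partial_x u\,dx=\sum_{k\ge0}c_k^2\lambda_k^{s}\lambda_k=[u]^2_{s+1,\Omega}$, and for item 4, $\int_\Omega(\partial_x I(u))^2\,dx=\sum_{k\ge0}c_k^2\lambda_k^{2s}\lambda_k=[u]^2_{2s+1,\Omega}$.

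The genuinely delicate point, and the one I expect to be the main obstacle, is the passage from truncations to full series in items 3 and 4. For item 4 the hypothesis $u\in H^{2s+1}_N(\Omega)$ gives $\sum_k c_k^2\lambda_k^{2s+1}<\infty$, so $\partial_x I(u^{(N)})$ is Cauchy in $L^2(\Omega)$ with limit $\partial_x I(u)$, and the integral is an honest $L^2$ norm to which the finite-sum identity converges. Item 3 is subtler: $u\in H^{s+1}_N(\Omega)$ guarantees $\partial_x u\in L^2(\Omega)$ but not $\partial_x I(u)\in L^2(\Omega)$, so $-\int_\Omega\partial_x I(u)\,\partial_x u\,dx$ must again be understood as the continuous bilinear extension of the integral on truncations; equivalently, since the bilinear form $(u,v)\mapsto\sum_k c_k(u)c_k(v)\lambda_k^{s+1}$ is bounded on $H^{s+1}_N(\Omega)$ by Cauchy--Schwarz, the finite-sum identity extends by continuity. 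Throughout, one must confirm that no spurious boundary contribution survives; this is ensured by the Neumann structure encoded in the spaces $H^{s+1}_N(\Omega)$ and $H^{2s+1}_N(\Omega)$ (for $s\in(\tfrac12,1)$ these consist exactly of the relevant $H^{\cdot}(\Omega)$ functions with $\partial_x u\cdot\vec n=0$ on $\partial\Omega$), and at the level of truncations by the Neumann condition satisfied by each $\varphi_k$. All remaining steps are routine.
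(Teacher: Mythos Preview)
The paper does not supply its own proof of this proposition; it is quoted from \cite{RT} and introduced with the phrase ``Without any proof, we give several key properties regarding the nonlocal operator $I$.'' Your spectral-calculus argument in the Neumann eigenbasis is correct and is exactly the natural route (and the one taken in the cited reference): orthonormality of $\{\varphi_k\}$ gives items 1--2 immediately, and the gradient orthogonality $\int_\Omega\partial_x\varphi_k\,\partial_x\varphi_j\,dx=\lambda_k\delta_{kj}$ (via integration by parts and the Neumann condition) gives items 3--4. Your care about interpreting the pairings in items 1 and 3 as continuous bilinear extensions when $I(u)$ or $\partial_xI(u)$ need not lie in $L^2$ is appropriate and matches how these identities are used throughout the paper.
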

\begin{proposition}$^{\text{\cite{NDSL}}}$\label{part}
	Let $ s_1, s_2 \in [0, +\infty) $. If $ u \in H^{s_1 + s_2}_N(\Omega) $ and $ v \in H^{s_2}_N(\Omega) $, then
	\begin{equation*}
	\int_{\Omega} (-\Delta)^{s_1} u (-\Delta)^{s_2} v \, dx = \int_{\Omega} ((-\Delta)^{s_1 + s_2} u) v \, dx.
	\end{equation*}
\end{proposition}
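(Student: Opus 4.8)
The plan is to prove the identity by diagonalizing every operator in the Neumann eigenbasis $\{\varphi_k\}_{k\ge 0}$ and reducing both sides to one and the same scalar series, the equality then being a manifestation of the self-adjointness of the fractional powers together with the composition law $(-\Delta)^{s_2}\circ(-\Delta)^{s_1}=(-\Delta)^{s_1+s_2}$. Concretely, I would first expand $u=\sum_{k}c_k\varphi_k$ and $v=\sum_{k}d_k\varphi_k$, so that, by the spectral definition of the fractional Laplacian recalled in Subsection 2.2,
\begin{equation*}
(-\Delta)^{s_1}u=\sum_{k}c_k\lambda_k^{s_1}\varphi_k,\qquad (-\Delta)^{s_2}v=\sum_{k}d_k\lambda_k^{s_2}\varphi_k,\qquad (-\Delta)^{s_1+s_2}u=\sum_{k}c_k\lambda_k^{s_1+s_2}\varphi_k.
\end{equation*}

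Next, using that $\{\varphi_k\}$ is orthonormal in $L^2(\Omega)$, i.e. $\int_\Omega \varphi_k\varphi_j\,dx=\delta_{kj}$, I would evaluate each side by Parseval's identity. The left-hand side becomes
\begin{equation*}
\int_\Omega (-\Delta)^{s_1}u\,(-\Delta)^{s_2}v\,dx=\sum_{k,j}c_k\lambda_k^{s_1}\,d_j\lambda_j^{s_2}\int_\Omega\varphi_k\varphi_j\,dx=\sum_{k}c_k d_k\,\lambda_k^{s_1+s_2},
\end{equation*}
while the right-hand side gives
\begin{equation*}
\int_\Omega\bigl((-\Delta)^{s_1+s_2}u\bigr)v\,dx=\sum_{k,j}c_k\lambda_k^{s_1+s_2}\,d_j\int_\Omega\varphi_k\varphi_j\,dx=\sum_{k}c_k d_k\,\lambda_k^{s_1+s_2}.
\end{equation*}
Since both expressions coincide with the single series $\sum_{k}c_k d_k\lambda_k^{s_1+s_2}$, the identity follows; equivalently, it is exactly the statement that $(-\Delta)^{s_2}$ is self-adjoint on the eigenbasis and that the powers compose additively, which lets one move $(-\Delta)^{s_2}$ from the second factor onto $u$.

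The delicate point, and the step I expect to be the genuine obstacle, is not the formal Parseval computation but the rigorous justification that the interchanges of summation and integration above are licit and that the two pairings are finite under exactly the stated regularity $u\in H^{s_1+s_2}_N(\Omega)$ and $v\in H^{s_2}_N(\Omega)$. To handle this cleanly I would first establish the identity on the dense subspace $\mathrm{span}\{\varphi_k:k\ge 0\}$ of finite eigenfunction expansions, where every series above is a finite sum, all fractional powers are smooth, and the integrals are classical, so that equality is immediate from orthonormality. I would then pass to general $u,v$ by approximation, interpreting both members as the continuous extension of the $L^2$-pairing $\langle\,\cdot\,,\,\cdot\,\rangle$ to the relevant dual pairing, and controlling the tail of $\sum_{k}c_k d_k\lambda_k^{s_1+s_2}$ by Cauchy--Schwarz after splitting the spectral weight $\lambda_k^{s_1+s_2}$ between the two factors in a way compatible with the norms $[u]_{s_1+s_2,\Omega,N}$ and $[v]_{s_2,\Omega,N}$ at hand. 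The crux is precisely that this splitting respects the balance between the two regularity indices, so that both sides are continuous in the appropriate topologies and the finite-dimensional identity persists in the limit; together with the norm equivalence $\|\cdot\|_{H^s_N(\Omega)}=\|\cdot\|_{H^s(\Omega)}$ recorded in Subsection 2.1, this truncate-and-pass-to-the-limit argument is what upgrades the purely formal computation to a proof, and is where the hypotheses on $u$ and $v$ are genuinely used.
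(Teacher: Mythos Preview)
The paper does not prove this proposition; it is quoted from \cite{NDSL} without proof, as the authors state explicitly that they ``give several key properties regarding the nonlocal operator $I$'' without argument. Your spectral approach---expand in the Neumann eigenbasis, reduce both sides to the common series $\sum_k c_k d_k\lambda_k^{s_1+s_2}$ via orthonormality, verify on finite expansions, and extend by density---is exactly the natural argument and is almost certainly what lies behind the cited result.

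There is, however, one genuine gap in your sketch. You propose to control the tail of $\sum_k c_k d_k\lambda_k^{s_1+s_2}$ by Cauchy--Schwarz ``in a way compatible with the norms $[u]_{s_1+s_2,\Omega,N}$ and $[v]_{s_2,\Omega,N}$.'' But no such splitting exists when $s_1>0$: writing $\lambda_k^{s_1+s_2}=\lambda_k^{\alpha}\lambda_k^{\beta}$ and applying Cauchy--Schwarz yields $[u]_{2\alpha,\Omega,N}[v]_{2\beta,\Omega,N}$, and finiteness under the stated hypotheses forces $2\alpha\le s_1+s_2$ and $2\beta\le s_2$, whence $2(s_1+s_2)=2\alpha+2\beta\le s_1+2s_2$, i.e.\ $s_1\le 0$. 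So the hypotheses recorded in the paper are in fact too weak for the series to converge absolutely (and hence for either integral to be a bona fide $L^2$ pairing) unless $s_1=0$. Your density argument therefore cannot close as written: the bilinear form you are trying to extend is not continuous on $H^{s_1+s_2}_N\times H^{s_2}_N$. The fix is not in your argument but in the statement---one needs, for instance, $u\in H^{2s_1+s_2}_N$ and $v\in H^{s_2}_N$, or symmetrically $u,v\in H^{s_1+s_2}_N$, under which your Parseval-plus-density proof goes through verbatim. You should flag this when writing up, and check the precise hypotheses in \cite{NDSL}.
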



Below, we give some important inequalities and propositions that play an important role in our proofs.
Note that we have from \cite{RT}
\begin{equation*}
\int_{\Omega} u \, dx \leq C(\Omega) \| u \|_2  \quad \text{(H${\rm \ddot{o}}$lder's inequality)},
\end{equation*}
\begin{equation*}
\| u \|_2 ^2 \leq C(\Omega) \| (-\Delta)^{\frac{s}{2}} u \|^2_2  \leq  C(\Omega) [u]^2_{s,\Omega}  \quad \text{(Fractional Poincar\'e's inequality)}.
\end{equation*}
In addition to aforementioned inequalities, the subsequent propositions  further give some embedding inequalities regarding Fractional Sobolev spaces.
\begin{proposition}$^{\text{\cite{NDSL}}}$\label{embed}
	Let $ s \in [0, +\infty) $. The following relationships hold:
	
	\noindent{\rm 1.} If $ s < d/2 $, then there exists a constant $ C $ such that
	\begin{equation*}
	\|u\|_{2d/(d-2s)} \leq C \|u\|_{H^s(\Omega)}, \quad \text{for all } u \in H^s(\Omega).
	\end{equation*}
	\noindent{\rm 2.} If $ s = d/2 $ and $ p \in (1, +\infty) $, then there exists a constant $ C_p$ such that
	\begin{equation*}
	\|u\|_{p} \leq C_p \|u\|_{H^s(\Omega)}, \quad \text{for all } u \in H^s(\Omega).
	\end{equation*}
	\noindent{\rm 3.} If $ s > d/2 $ and $ s - d/2 \notin \mathbb{N} $, then there exists a constant $ C $ such that
	\begin{equation*}
	\|u\|_{C^{s - d/2}(\Omega)} \leq C \|u\|_{H^s(\Omega)}, \quad \text{for all } u \in H^s(\Omega) .
	\end{equation*}	
	In particular, it holds
	\begin{equation*}
	\|u\|_{\infty} \leq C \|u\|_{H^s(\Omega)}, \quad \text{for all } u \in H^s(\Omega) .
	\end{equation*}
\end{proposition}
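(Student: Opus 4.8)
This proposition is the classical fractional Sobolev embedding theorem for $H^s=W^{s,2}$, and the plan is to reduce all three regimes to the whole space $\mathbb{R}^d$ and then argue by Fourier analysis. Since $\Omega$ is a bounded interval (in particular a Lipschitz extension domain), the first step is to fix a bounded linear extension operator $E\colon H^s(\Omega)\to H^s(\mathbb{R}^d)$ with $Eu|_\Omega=u$ and $\|Eu\|_{H^s(\mathbb{R}^d)}\le C\|u\|_{H^s(\Omega)}$. Because the paper already identifies $H^s_N(\Omega)$ with the classical space $H^s(\Omega)$, and because each target norm over $\Omega$ is dominated by the corresponding norm of $Eu$ over $\mathbb{R}^d$ (for instance $\|u\|_{L^q(\Omega)}\le\|Eu\|_{L^q(\mathbb{R}^d)}$ and $\|u\|_{C^{s-d/2}(\Omega)}\le\|Eu\|_{C^{s-d/2}(\mathbb{R}^d)}$), it suffices to prove the three embeddings on $\mathbb{R}^d$ and then restrict.

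For the subcritical case $s<d/2$ I would use the Bessel-potential description, in which $\|u\|_{H^s(\mathbb{R}^d)}^2$ is comparable to $\int_{\mathbb{R}^d}(1+|\xi|^2)^s|\hat u(\xi)|^2\,d\xi$, and write $u=G_s*f$ with $f=(1-\Delta)^{s/2}u\in L^2$ and $\|f\|_2\le C\|u\|_{H^s}$. The Bessel kernel $G_s$ behaves like the Riesz kernel $|x|^{s-d}$ near the origin and decays exponentially at infinity, so the estimate splits into a Riesz-potential part governed by the bounded mapping $L^2\to L^{2^*_s}$ with $\tfrac{1}{2^*_s}=\tfrac12-\tfrac sd$, which is exactly the Hardy--Littlewood--Sobolev inequality, together with an exponentially decaying tail handled by Young's inequality. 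This yields $\|u\|_{2d/(d-2s)}\le C\|u\|_{H^s}$. I expect the HLS step to be the main obstacle, since it is the only genuinely nontrivial inequality involved; the remaining two regimes bootstrap from it or from elementary integrability of the Fourier weight.

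The critical case $s=d/2$ follows from the subcritical one. On $\mathbb{R}^d$ one has $\|u\|_{H^{s'}}\le\|u\|_{H^{d/2}}$ whenever $s'\le d/2$, because $(1+|\xi|^2)^{s'}\le(1+|\xi|^2)^{d/2}$; so, given $p\in[2,\infty)$, I choose $s'=\tfrac d2-\tfrac dp\in[0,\tfrac d2)$ so that $2^*_{s'}=p$ and chain $H^{d/2}\hookrightarrow H^{s'}\hookrightarrow L^p$. After restriction to the bounded set $\Omega$, the remaining range $p\in(1,2)$ is covered by $L^2(\Omega)\hookrightarrow L^p(\Omega)$ (finite measure), giving all $p\in(1,\infty)$.

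For the supercritical case $s>d/2$ with $s-d/2\notin\mathbb{N}$, Cauchy--Schwarz gives $\int_{\mathbb{R}^d}|\hat u|\,d\xi\le\left(\int_{\mathbb{R}^d}(1+|\xi|^2)^{-s}\,d\xi\right)^{1/2}\|u\|_{H^s}$, and the first factor is finite precisely because $2s>d$; hence $u$ coincides a.e.\ with a continuous function and $\|u\|_\infty\le\|\hat u\|_{1}\le C\|u\|_{H^s}$, which already yields the final $L^\infty$ assertion. For the Hölder bound of order $\alpha=(s-\tfrac d2)-\lfloor s-\tfrac d2\rfloor$ I would differentiate the Fourier inversion formula $\lfloor s-\tfrac d2\rfloor$ times and estimate each increment through $|e^{ix\cdot\xi}-e^{iy\cdot\xi}|\le\min(2,|\xi|\,|x-y|)$, splitting the frequency integral at $|\xi|\sim|x-y|^{-1}$ and applying Cauchy--Schwarz on each piece; both the low- and high-frequency contributions are then controlled by $|x-y|^{\alpha}\|u\|_{H^s}$ thanks to $2s>d$. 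Restricting to $\Omega$ produces the stated $C^{\,s-d/2}(\Omega)$ estimate. The only delicate point beyond HLS is this frequency splitting, which reduces to an elementary polar-coordinate computation.
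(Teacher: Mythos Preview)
Your argument is correct and follows the standard route to the fractional Sobolev embeddings: extend to $\mathbb{R}^d$, use the Bessel-potential representation together with Hardy--Littlewood--Sobolev for the subcritical regime, downgrade in $s$ for the critical regime, and use integrability of $(1+|\xi|^2)^{-s}$ plus a frequency-splitting computation for the supercritical H\"older bound. Each step is legitimate, and your handling of the integer part in the supercritical case is the right way to interpret $C^{s-d/2}$ when $s-d/2\notin\mathbb{N}$.

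However, there is nothing to compare against: the paper does \emph{not} prove this proposition. It is stated with the citation superscript $^{\cite{NDSL}}$ and is quoted verbatim from that reference as a known background fact, with no argument supplied. So your write-up is not an alternative to the paper's proof but rather a self-contained justification of a result the authors take for granted. If you want to align with the paper, a one-line reference to \cite{NDSL} (or to any standard source such as \cite{EDNG}) suffices; if you prefer to keep your sketch, it stands on its own.
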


\begin{proposition}$^{\text{\cite{NDSL}}}$\label{interpolation}
	If $ s_0, s, s_1 \in [0, +\infty) $, $ s_0 \le s \le s_1 $ and $ u \in H_N^{s_1}(\Omega) $, then
	\begin{equation*}
	[u]_{s,\Omega,N} \le [u]^{1-\theta}_{s_0,\Omega,N} [u]^{\theta}_{s_1,\Omega,N},
	\end{equation*}
	where $ \theta = \frac{s - s_0}{s_1 - s_0}. $
\end{proposition}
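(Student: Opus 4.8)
The plan is to prove the inequality directly from the spectral definition of the homogeneous seminorm, reducing everything to a single application of the discrete H\"older inequality. Writing $u = \sum_{k=0}^{\infty} c_k \varphi_k$ in the eigenbasis of the Neumann Laplacian, recall that $[u]_{s,\Omega,N}^2 = \sum_{k=0}^{\infty} c_k^2 \lambda_k^s$. The first observation I would record is that the intermediate exponent is an affine combination of the endpoints: since $\theta = \frac{s-s_0}{s_1-s_0}$, one checks immediately that $s = (1-\theta) s_0 + \theta s_1$. Consequently each eigenvalue power factors as $\lambda_k^s = (\lambda_k^{s_0})^{1-\theta}(\lambda_k^{s_1})^{\theta}$, and splitting the coefficient as $c_k^2 = (c_k^2)^{1-\theta}(c_k^2)^{\theta}$ gives
\[
[u]_{s,\Omega,N}^2 = \sum_{k=0}^{\infty} \left(c_k^2 \lambda_k^{s_0}\right)^{1-\theta} \left(c_k^2 \lambda_k^{s_1}\right)^{\theta}.
\]

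Next I would apply H\"older's inequality for series with the conjugate exponents $p = \frac{1}{1-\theta}$ and $q = \frac{1}{\theta}$, which are admissible precisely because $(1-\theta) + \theta = 1$. Taking $a_k = (c_k^2 \lambda_k^{s_0})^{1-\theta}$ and $b_k = (c_k^2 \lambda_k^{s_1})^{\theta}$, the exponents cancel, $a_k^{p} = c_k^2 \lambda_k^{s_0}$ and $b_k^{q} = c_k^2 \lambda_k^{s_1}$, so that
\[
[u]_{s,\Omega,N}^2 \le \left(\sum_{k=0}^\infty c_k^2 \lambda_k^{s_0}\right)^{1-\theta} \left(\sum_{k=0}^\infty c_k^2 \lambda_k^{s_1}\right)^{\theta} = \left([u]_{s_0,\Omega,N}^2\right)^{1-\theta} \left([u]_{s_1,\Omega,N}^2\right)^{\theta}.
\]
Taking square roots yields the claimed bound. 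The hypothesis $u \in H_N^{s_1}(\Omega)$ guarantees the second factor on the right is finite, and by the ordering $s_0 \le s \le s_1$ the endpoint and intermediate seminorms are all controlled, so there is no convergence obstruction in the infinite sums.

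Finally, I would dispose of the degenerate cases, which sit outside the clean H\"older step but are trivial: if $\theta \in \{0,1\}$ (that is, $s = s_0$ or $s = s_1$, including $s_0 = s_1$) the statement reduces to an equality, and if one of the endpoint seminorms vanishes the inequality is immediate. The only point deserving a line of care is the zero eigenvalue $\lambda_0 = 0$ arising from the Neumann constant mode: with the convention $0^0 = 1$ the factorization $\lambda_0^s = (\lambda_0^{s_0})^{1-\theta}(\lambda_0^{s_1})^{\theta}$ remains valid for every admissible triple $(s_0, s, s_1)$, so this term contributes consistently on both sides. I do not anticipate a genuine obstacle here; the entire content is the elementary convexity of $t \mapsto \lambda^t$ repackaged as one application of H\"older's inequality, and the proof is short.
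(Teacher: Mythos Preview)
Your proof is correct and complete; the spectral factorization together with H\"older's inequality with exponents $\tfrac{1}{1-\theta}$ and $\tfrac{1}{\theta}$ is exactly the standard route to this interpolation inequality, and your handling of the degenerate cases and the zero Neumann eigenvalue is sound. Note that the paper itself does not prove this proposition---it is quoted from the reference \cite{NDSL} without argument---so there is no in-paper proof to compare against; your argument is the canonical one and would be what any reader supplying the details would write.
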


Finally, we present the following two lemmas  to prove the existence of the solution.
\begin{lemma}$^{\text{\cite{RT}}}$\label{RT1}
	For all $ g \in L^2(\Omega) $ with $ \int_{\Omega} g \,dx = 0 $, there exists a unique function $ u \in H^{2s}_N(\Omega) $ such that	 
	\begin{equation*}
	-I(u) = g \quad \text{in } L^2(\Omega) \quad \text{and} \quad \int_{\Omega} u \, dx = 0.
	\end{equation*}	
	Furthermore, if $ g \in H^1(\Omega) $, then $ u \in H^{2s + 1}_N(\Omega) $.
\end{lemma}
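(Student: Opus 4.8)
The plan is to solve the equation explicitly in the spectral basis $\{\varphi_k\}_{k\ge 0}$ of the Neumann Laplacian on $\Omega$, for which $\lambda_0=0$ (with $\varphi_0\equiv|\Omega|^{-1/2}$ constant) and $0<\lambda_1\le\lambda_2\le\cdots\to\infty$. Given $g\in L^2(\Omega)$, I would expand $g=\sum_{k\ge0}g_k\varphi_k$ with $g_k=\langle g,\varphi_k\rangle$. Since $\varphi_0$ is constant, the hypothesis $\int_\Omega g\,dx=0$ says precisely that $g_0=|\Omega|^{-1/2}\int_\Omega g\,dx=0$, so $g=\sum_{k\ge1}g_k\varphi_k$ and $\sum_{k\ge1}g_k^2=\|g\|_2^2$. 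Because $I$ acts as the multiplier $\sum c_k\varphi_k\mapsto-\sum c_k\lambda_k^s\varphi_k$, the natural candidate is
$$u:=\sum_{k\ge1}\lambda_k^{-s}g_k\,\varphi_k,$$
which has no $\varphi_0$-component and hence automatically satisfies $\int_\Omega u\,dx=|\Omega|^{1/2}\langle u,\varphi_0\rangle=0$.

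Next I would verify the three required properties. For membership,
$$\|u\|_{H^{2s}_N(\Omega)}^2=\sum_{k\ge1}\lambda_k^{-2s}g_k^2\bigl(1+\lambda_k^{2s}\bigr)=\sum_{k\ge1}g_k^2\bigl(\lambda_k^{-2s}+1\bigr)\le\bigl(1+\lambda_1^{-2s}\bigr)\|g\|_2^2<\infty,$$
where the decisive point is the spectral gap $\lambda_k\ge\lambda_1>0$ for $k\ge1$, which is exactly what renders $I$ boundedly invertible on the mean-zero subspace. For the equation, termwise $-I(u)=\sum_{k\ge1}\lambda_k^s\lambda_k^{-s}g_k\varphi_k=\sum_{k\ge1}g_k\varphi_k=g$ in $L^2(\Omega)$. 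For uniqueness, if $u_1,u_2$ are two solutions then $w:=u_1-u_2$ satisfies $I(w)=0$ in $L^2(\Omega)$ and $\int_\Omega w\,dx=0$; writing $w=\sum_{k\ge0}w_k\varphi_k$, the first identity forces $\lambda_k^s w_k=0$, hence $w_k=0$ for $k\ge1$, and the second forces $w_0=0$, so $w\equiv0$.

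For the additional regularity, assume $g\in H^1(\Omega)$. Since $s=1\in(0,3/2)$, the identification recalled in Subsection 2.1 gives $H^1_N(\Omega)=H^1(\Omega)$ with equal norms, so $[g]_{1,\Omega,N}^2=\sum_{k\ge1}\lambda_k g_k^2<\infty$. Then
$$\|u\|_{H^{2s+1}_N(\Omega)}^2=\sum_{k\ge1}\lambda_k^{-2s}g_k^2\bigl(1+\lambda_k^{2s+1}\bigr)=\sum_{k\ge1}g_k^2\bigl(\lambda_k^{-2s}+\lambda_k\bigr)\le\lambda_1^{-2s}\|g\|_2^2+[g]_{1,\Omega,N}^2<\infty,$$
so $u\in H^{2s+1}_N(\Omega)$. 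The whole argument is essentially bookkeeping in the eigenbasis, so there is no genuine obstacle; the only points demanding care are the harmless but essential spectral-gap bound $\lambda_k\ge\lambda_1>0$ used above, which is what guarantees that the formal series for $u$ converges in the strong topology of $H^{2s}_N(\Omega)$, and the identification $H^1(\Omega)=H^1_N(\Omega)$, which is what lets a plain $H^1$ hypothesis on $g$ be read off coefficient by coefficient. (The singular-integral representation of $I$ from Proposition \ref{qiyi} is not needed here.)
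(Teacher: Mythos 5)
Your proof is correct. The paper gives no proof of this lemma — it is quoted verbatim from \cite{RT} (Proposition 2.3 and Corollary 2.2 there) — and the argument in that source is the same spectral inversion of $I$ on the mean-zero subspace, with the spectral gap $\lambda_k\ge\lambda_1>0$ and the identification $H^1_N(\Omega)=H^1(\Omega)$ doing exactly the work you assign them, so your proposal matches the intended proof.
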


\begin{lemma}$^{\text{\cite{RT}}}$\label{RT2}
	For all $ g \in L^2(\Omega) $, there exists a unique function $ v \in H^{2s}_N(\Omega) $ such that
	
	\begin{equation*}
	-I(v) + \frac{1}{|\Omega|}\int_{\Omega} v \, dx = g.
	\end{equation*}	
	In addition, if $ g \in H^1(\Omega) $, then $ v \in H^{2s + 1}_N(\Omega) $ and the map $ g \to v $ is bijective.
\end{lemma}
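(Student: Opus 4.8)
The plan is to solve the equation explicitly by spectral expansion in the $L^{2}(\Omega)$-orthonormal basis $\{\varphi_k\}_{k\ge 0}$ of Neumann--Laplacian eigenfunctions, which is precisely the basis defining the spaces $H^{\sigma}_N(\Omega)$ and the operator $I$. I would first record that $\lambda_0=0$ and $\varphi_0\equiv|\Omega|^{-1/2}$, so that $\int_{\Omega}\varphi_k\,dx=|\Omega|^{1/2}\delta_{0k}$; hence, writing $v=\sum_{k\ge 0}v_k\varphi_k$, the scalar $\frac{1}{|\Omega|}\int_{\Omega}v\,dx$ equals the constant function $v_0\varphi_0$. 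Expanding $g=\sum_{k\ge 0}g_k\varphi_k\in L^{2}(\Omega)$ and using $-I(v)=\sum_{k\ge 0}v_k\lambda_k^{s}\varphi_k$, the equation $-I(v)+\frac{1}{|\Omega|}\int_{\Omega}v\,dx=g$ becomes, mode by mode, $v_0=g_0$ (the $\lambda_0^{s}=0$ degeneracy of $I$ on constants being compensated by the averaging term, which is exactly why that term is present) and $v_k=\lambda_k^{-s}g_k$ for $k\ge 1$. This determines the coefficients of $v$ uniquely, establishing uniqueness.

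For the regularity claims I would estimate the spectral norms directly. Since the Neumann spectrum satisfies $0=\lambda_0<\lambda_1\le\lambda_2\le\cdots\to+\infty$, one has $\lambda_k^{-2s}\le\lambda_1^{-2s}$ for $k\ge 1$ and therefore
\begin{equation*}
\sum_{k\ge 0}v_k^{2}\bigl(1+\lambda_k^{2s}\bigr)=g_0^{2}+\sum_{k\ge 1}g_k^{2}\bigl(1+\lambda_k^{-2s}\bigr)\le\bigl(1+\lambda_1^{-2s}\bigr)\|g\|_2^{2}<\infty,
\end{equation*}
so $v\in H^{2s}_N(\Omega)$. If in addition $g\in H^{1}(\Omega)$, which coincides with $H^{1}_N(\Omega)$ since $1<3/2$ by \cite{MSBA}, then $v_k^{2}\lambda_k^{2s+1}=g_k^{2}\lambda_k$ for $k\ge 1$, whence
\begin{equation*}
\sum_{k\ge 0}v_k^{2}\bigl(1+\lambda_k^{2s+1}\bigr)=g_0^{2}+\sum_{k\ge 1}g_k^{2}\bigl(\lambda_k^{-2s}+\lambda_k\bigr)\le C\sum_{k\ge 0}g_k^{2}\bigl(1+\lambda_k\bigr)=C\|g\|_{H^{1}(\Omega)}^{2}<\infty,
\end{equation*}
i.e. $v\in H^{2s+1}_N(\Omega)$. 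For the bijectivity of $g\mapsto v$ from $H^{1}(\Omega)$ onto $H^{2s+1}_N(\Omega)$: the map is linear and injective by the coefficient formulas above, and surjectivity follows by running the construction backwards --- given $v=\sum_{k\ge 0}v_k\varphi_k\in H^{2s+1}_N(\Omega)$, put $g_0:=v_0$ and $g_k:=\lambda_k^{s}v_k$ for $k\ge 1$; then $\sum_{k\ge 0}g_k^{2}(1+\lambda_k)=v_0^{2}+\sum_{k\ge 1}v_k^{2}(\lambda_k^{2s}+\lambda_k^{2s+1})\le C\sum_{k\ge 0}v_k^{2}(1+\lambda_k^{2s+1})<\infty$, so $g:=\sum_{k\ge 0}g_k\varphi_k\in H^{1}(\Omega)$, and $g$ is mapped back to $v$ by construction.

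I do not anticipate a genuine obstacle: the statement is elementary spectral theory once the basis is fixed, and the only points needing care are the bookkeeping of the zero mode $k=0$ (where the degeneracy $\lambda_0=0$ of $I$ makes the nonlocal term $\frac{1}{|\Omega|}\int_{\Omega}v\,dx$ indispensable for pinning down the average of $v$) and the invocation of the identification $H^{\sigma}_N(\Omega)=H^{\sigma}(\Omega)$ from \cite{MSBA} so that the hypothesis $g\in H^{1}(\Omega)$ can be read spectrally. An alternative, less computational route would be to apply the Lax--Milgram theorem to the coercive bilinear form $a(v,w)=-\langle I(v),w\rangle+\frac{1}{|\Omega|}\bigl(\int_{\Omega}v\bigr)\bigl(\int_{\Omega}w\bigr)$ on $H^{s}_N(\Omega)$ (coercivity being the equivalent-norm characterization of $\|\cdot\|_{H^{s}_N(\Omega)}$), obtaining a weak solution $v\in H^{s}_N(\Omega)$, and then to bootstrap using that $I(v)=\frac{1}{|\Omega|}\int_{\Omega}v-g$ lies in $L^{2}(\Omega)$ (resp. $H^{1}(\Omega)$) together with the elliptic regularity of Lemma \ref{RT1}; the spectral computation above, however, is shorter and delivers both regularity levels and the bijectivity simultaneously.
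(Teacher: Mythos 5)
Your proposal is correct. Note, however, that the paper does not prove this lemma at all: it is imported verbatim from \cite{RT} (Proposition 2.3 and Corollary 2.2 there), and the authors explicitly defer to that reference. Your spectral argument is the natural one — since both the operator $I$ and the spaces $H^{\sigma}_N(\Omega)$ are defined through the Neumann eigenbasis $\{\lambda_k,\varphi_k\}$, the equation diagonalizes, the zero mode is pinned down by the averaging term exactly as you say, and the norm estimates and bijectivity follow from the coefficient formulas $v_0=g_0$, $v_k=\lambda_k^{-s}g_k$ together with $\lambda_1>0$ and the identification $H^{1}_N(\Omega)=H^{1}(\Omega)$ from \cite{MSBA}; this is essentially the argument of the cited source, so there is nothing to add.
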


\subsection{Energy/entropy methods -- Integral inequalities}
For the solutions to problem \eqref{222},  in general, the maximum principle and comparison principle do not hold. In \cite{CIAM, RT}, the authors established the existence of non-negative solutions by relying on two types of integral inequalities. It is straightforward to show that the smooth solutions to \eqref{222} satisfy the energy inequality
\begin{equation}
-\int_{\Omega} u(t) I(u(t)) \, dx + \int_0^T \int_{\Omega} u^n (\partial_{x}I(u))^2 \, dx dt \leq -\int_{\Omega} u_0 I(u_0) \, dx,
\end{equation}
and entropy inequality
\begin{equation}
\int_{\Omega} G(u(t)) \, dx - \int_0^T \int_{\Omega} \partial_{x}u \partial_{x} I(u) \, dx dt = \int_{\Omega} G(u_0) \, dx,
\end{equation}
where  entropy function $ G : \mathbb{R} \rightarrow [0, +\infty) $ defined by the properties $G''(z) = \frac{1}{z^n} $ for any $z > 0 $, i.e.
\begin{align*}
G(z) = \int_A^z \int_A^r \frac{1}{t^n} \, dt \, dr,
\end{align*}
and  $A>0$, so that $ G $ is a non-negative convex function with a minimum point at $ z = A $  satisfying $ G(A) = G'(A) = 0 $.  A simple computation shows that for any $z>0$,
\begin{align*}
G(z) := \begin{cases}
\frac{z^{2-n}}{(n-1)(n-2)} + \frac{A^{1-n}z}{n-1} + \frac{A^{2-n}}{2-n}, & \text{if } n \neq \{1, 2\}, \\
z \ln\left(\frac{z}{A}\right) - z + A, & \text{if } n = 1, \\
\frac{z}{A} - \ln\left(\frac{z}{A}\right) - 1, & \text{if } n = 2,
\end{cases}
\end{align*}
and  $G(z) = +\infty $ for any $ z < 0 $, $ G(0) = \lim\limits_{z \to 0^+} G(z) $.

\subsection{Notations}
In what follows, we denote by $\| \cdot \|_r (r \ge 1) $ the norm in $L^r(\Omega)$ and by $\left\langle \cdot , \cdot\right\rangle  $ the $L^2(\Omega)$-inner product.
$C$ denotes a generic positive constant, which may differ at each appearance.

$\mathcal{D}(\Omega)$ denotes the space of test functions on an open domain $\Omega \subseteq \mathbb{R}^n$, consisting of all infinitely differentiable functions $\phi : \Omega \to \mathbb{R}$ with compact support. Formally,
\[
\mathcal{D}(\Omega) := \left\{ \phi \in C^\infty(\Omega) : \operatorname{supp}(\phi) \text{ is compact and } \operatorname{supp}(\phi) \subset \Omega \right\},
\]
where $\operatorname{supp}(\phi) = \overline{\{ x \in \Omega : \phi(x) \neq 0 \}}$.

Finally, we define $ z_+ = \max\{ 0, z \} $, $\Omega_T=(0,T)\times \Omega$, $\bar{u}_0:= \frac{1}{|\Omega|} \int_{\Omega} u_0(x) \,dx  $, and $\frac{1}{0}=+\infty$.


\section{Some key lemmas}


In this section, we present several lemmas that are essential for determining the long-time behavior of solutions. We begin with the following weighted interpolation inequality, whose proof is inspired by \cite[Lemma 1]{TAL}. 
\begin{lemma}\label{2007lem}
	For any measurable $ w : \Omega \to [0, \infty) $ and any non-negative $ v \in H^{2s+1}(\Omega) $, then for any $x_0 \in \Omega$, one has
	\begin{align}
	\int_\Omega \frac{v^2(x)}{w(x)}  \,dx  	&\int_\Omega w(x) |\partial_x (-\Delta)^s v|^2 \, dx
	\geq \frac{1}{4|\Omega|^2} \left( \int_\Omega | (-\Delta)^{\frac{s}{2}} v|^2\, dx   \right) ^2.
	\end{align}	
\end{lemma}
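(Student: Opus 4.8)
The plan is to first strip off the weight $w$ by Cauchy--Schwarz and thereby reduce the statement to a single weighted one-dimensional inequality, and then to prove that inequality by combining the fundamental theorem of calculus with the sign of $I(v)$ at an extremum of $v$. Since $v\,|\partial_x(-\Delta)^s v|=\tfrac{v}{\sqrt w}\cdot\sqrt w\,|\partial_x(-\Delta)^s v|$, the Cauchy--Schwarz inequality in $L^2(\Omega)$ gives
\[\Big(\int_\Omega v\,|\partial_x(-\Delta)^s v|\,dx\Big)^2\le \int_\Omega\frac{v^2}{w}\,dx\int_\Omega w\,|\partial_x(-\Delta)^s v|^2\,dx,\]
so it suffices to prove
\[\int_\Omega v\,|\partial_x(-\Delta)^s v|\,dx\ \ge\ \frac{1}{2|\Omega|}\int_\Omega|(-\Delta)^{s/2}v|^2\,dx .\]
By Proposition~\ref{forall}(1) the right-hand side equals $\tfrac{1}{2|\Omega|}\langle(-\Delta)^s v,v\rangle=-\tfrac{1}{2|\Omega|}\langle I(v),v\rangle$, and $\partial_x(-\Delta)^s v=-\partial_x I(v)$; hence the target is
\[2|\Omega|\int_\Omega v\,|\partial_x I(v)|\,dx\ \ge\ -\int_\Omega v\,I(v)\,dx=\int_\Omega v\,(-\Delta)^s v\,dx .\]

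For this reduced inequality I would take $x_0$ to be a point where $v$ attains its minimum. Because $v\in H^{2s+1}(\Omega)\hookrightarrow C(\bar\Omega)$ such a point exists, and by the pointwise representation \eqref{addnewfor} of Proposition~\ref{qiyi}, $I(v)(x_0)=\int_\Omega\big(v(y)-v(x_0)\big)K(x_0,y)\,dy\ge 0$ since $K\ge 0$ and $v(y)\ge v(x_0)$ for all $y$; equivalently $(-\Delta)^s v(x_0)\le 0$. Moreover $(-\Delta)^s v$ has zero mean on $\Omega$, being orthogonal to the constants in the eigenbasis defining $I$, so $-\int_\Omega v\,I(v)\,dx=\int_\Omega\big(v-\min_\Omega v\big)(-\Delta)^s v\,dx$. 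Since $v\in H^{2s+1}(\Omega)$ we have $(-\Delta)^s v\in H^1(\Omega)\hookrightarrow C(\bar\Omega)$, so for every $x$ the fundamental theorem of calculus together with $(-\Delta)^s v(x_0)\le 0$ gives the one-sided bound $(-\Delta)^s v(x)=(-\Delta)^s v(x_0)+\int_{x_0}^x\partial_y(-\Delta)^s v(y)\,dy\le\big|\int_{x_0}^x\partial_y(-\Delta)^s v(y)\,dy\big|\le\int_{x_0\wedge x}^{x_0\vee x}|\partial_y I(v)(y)|\,dy$. Multiplying by the nonnegative function $v-\min_\Omega v$ and integrating,
\[-\int_\Omega v\,I(v)\,dx\ \le\ \int_\Omega\big(v(x)-\min_\Omega v\big)\!\int_{x_0\wedge x}^{x_0\vee x}|\partial_y I(v)(y)|\,dy\,dx .\]

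The final step is Fubini plus a weighted Poincar\'e-type estimate. Interchanging the order of integration, the right-hand side equals $\int_\Omega|\partial_y I(v)(y)|\,\Phi(y)\,dy$, where $\Phi(y)=\int_{\{x\in\Omega:\, y\in[x_0\wedge x,\,x_0\vee x]\}}\big(v(x)-\min_\Omega v\big)\,dx$; on each side of $x_0$ this weight is a monotone primitive of the nonnegative function $v-\min_\Omega v$, with $0\le\Phi\le|\Omega|\,\mathrm{osc}_\Omega v$. What then has to be shown is
\[\int_\Omega|\partial_y I(v)(y)|\,\Phi(y)\,dy\ \le\ 2|\Omega|\int_\Omega v(y)\,|\partial_x I(v)(y)|\,dy,\]
which, inserted into the chain above, gives $-\int_\Omega v\,I(v)\,dx\le 2|\Omega|\int_\Omega v\,|\partial_x I(v)|\,dx$ and closes the proof. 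I expect this last estimate to be the main obstacle: for a general nonnegative $v$ the weight $\Phi(y)$ is \emph{not} pointwise dominated by a multiple of $v(y)$, so the inequality cannot be obtained by merely discarding $\partial_x I(v)$; it has to be carried out while retaining the extra derivative hidden in $\partial_x I(v)=I(\partial_x v)$, and the nonlocality of $I$ — which precludes free integration by parts and rules out a local maximum principle — is precisely what makes the sign information of Proposition~\ref{qiyi} at the extrema of $v$ indispensable, in place of the classical local argument of \cite[Lemma~1]{TAL}.
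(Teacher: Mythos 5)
Your opening moves coincide with the paper's: the Cauchy--Schwarz reduction (the paper applies the same Schwarz step to bound $\int_{x_0}^{x}v\,\partial_y(-\Delta)^s v\,dy$ by the product of the two weighted norms), and the selection of $x_0$ as a minimum point of $v$ so that the kernel representation \eqref{addnewfor} forces $(-\Delta)^s v(x_0)\le 0$ --- that sign observation is indeed the crux of the paper's argument. But your proof is not complete, and you say so yourself: the final estimate $\int_\Omega|\partial_yI(v)(y)|\,\Phi(y)\,dy\le 2|\Omega|\int_\Omega v(y)\,|\partial_xI(v)(y)|\,dy$ is announced as ``what then has to be shown'' and never established. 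It is a genuine obstruction, not a routine verification: for $y$ slightly to the right of $x_0$ the set $\{x:\,y\in[x_0\wedge x,x_0\vee x]\}$ is $[y,b]$, so $\Phi(y)\to\int_{x_0}^{b}(v-\min_\Omega v)\,dx>0$ as $y\to x_0^{+}$ while $v(y)\to\min_\Omega v$, which may vanish; hence no pointwise comparison $\Phi\le Cv$ is available, and you offer no integrated substitute.

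The reason you are driven to such an estimate is that you take absolute values too early and then try to push the derivative of $I(v)$ onto a weight via Fubini. The paper never does this: it keeps the signed quantity $\int_{x_0}^{x}v\,\partial_y(-\Delta)^s v\,dy\le A$, integrates by parts once on $[x_0,x]$ to produce the boundary terms $v(-\Delta)^s v\big|_{x_0}^{x}$, rewrites the remaining integral $\int_{x_0}^{x}(-\Delta)^s v\,\partial_y v\,dy$ as $\tfrac12\bigl[|(-\Delta)^{s/2}v|^2\bigr]_{x_0}^{x}$, and only then integrates in $x$ over $\Omega$. At that point the identity $\int_\Omega v\,(-\Delta)^s v\,dx=\int_\Omega|(-\Delta)^{s/2}v|^2\,dx$ supplies the main term with net coefficient $1-\tfrac12=\tfrac12$, and the two $x_0$-terms, $-|\Omega|\,v(x_0)(-\Delta)^s v(x_0)$ and $\tfrac{|\Omega|}{2}|(-\Delta)^{s/2}v(x_0)|^2$, are simply discarded because both are nonnegative (this is where $v\ge0$ and $(-\Delta)^s v(x_0)\le0$ are used). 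Your argument never performs this integration by parts, so the factor $\tfrac12\int_\Omega|(-\Delta)^{s/2}v|^2\,dx$ is never recovered; as written, the proof has a hole exactly where you flagged it.
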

\begin{proof}
	By the Schwarz inequality, we obtain
	\begin{align}\label{MM}
	\int_{x_0}^x v\partial_y (-\Delta)^s v \,dy
	\le \left( \int_\Omega \frac{v^2}{w}  \,dx \right)^{\frac{1}{2}} \left( \int_\Omega w | \partial_x (-\Delta)^{s} v|^2  \,dx \right)^{\frac{1}{2}} :=M,		
	\end{align}
	for all $ x_0, x \in \Omega $. Then Applying Proposition \ref{forall} yields
	\begin{align*}
	M&\ge  v (-\Delta)^s v \Big| ^x_{x_0}
	-  \int_{x_0}^x (-\Delta)^s v\partial_y v \,dy   \nonumber\\
	&\ge v (-\Delta)^s v \Big| ^x_{x_0}
	- \int_{x_0}^x \frac{1}{2} \partial_{y}\left(| (-\Delta)^{\frac{s}{2}} v |^2 \right) 	\,dy \nonumber\\
	& = v(x) (-\Delta)^s v(x) - v(x_0) (-\Delta)^s v(x_0) - \frac{1}{2} \left| (-\Delta)^{\frac{s}{2}} v (x)  \right| ^2
	+\frac{1}{2} \left| (-\Delta)^{\frac{s}{2}} v (x_0)  \right|^2 .
	\end{align*}
	Integrating in $ x $ over $ \Omega $ gives
	\begin{align}\label{jj}
	|\Omega|M
	\ge \frac{1}{2} \int_\Omega | (-\Delta)^{\frac{s}{2}} v |^2 \,dx
	-|\Omega| v(x_0) (-\Delta)^s v(x_0)
	+ \frac{|\Omega|}{2} \left| (-\Delta)^{\frac{s}{2}} v (x_0)  \right|^2 .
	\end{align}
	
	Next, we claim  that there  exists $x_0\in \bar{\Omega}$ satisfying $ (-\Delta)^s v(x_0)\le 0.$
	Indeed,  if $v(x)$ attains the minimal value at either the endpoint $a$ or $b$, then \eqref{addnewfor} indicates
	\begin{align*}
	(-\Delta)^s v(a)\le 0~~\text{or}~~(-\Delta)^s v(b)\le 0.
	\end{align*}
	In this case, we can choose the endpoint $a$ or $b$ as $x_0$ to complete this proof.
	If $a$ and $b$ are not minimal value points, then the continuity of $v(x)$ shows that there exists $x_1 \in \Omega$ such that $v(x_1) = \min_{x \in \bar{\Omega}} v(x)$.
	Using \eqref{addnewfor} again yields
	\begin{align*}
	(-\Delta)^s v(x_1)\le 0.
	\end{align*}
	We therefore set $x_0=x_1,$  reducing \eqref{jj} to
	\begin{align*}
	|\Omega|M
	\ge \frac{1}{2} \int_\Omega | (-\Delta)^{\frac{s}{2}} v |^2 \,dx.
	\end{align*}
	Then the proof of lemma \ref{2007lem} is complete. 	
\end{proof}

Next, based on \cite{NJ,RTE}, we recall the following estimates of solutions to  the non-autonomous differential inequality. 
\begin{lemma}\label{inequ1}
	If $\alpha>0$, $\beta>0$,  then the non-negative solution $X(t)$ to 
	\begin{equation}\label{05.1}
	\begin{cases}
	X'(t)  +\beta X^\lambda(t) \le \alpha, \quad t> 0 ,\\
	X(0)=X_0>0,
	\end{cases}
	\end{equation}
	satisfies the following inequality:
	
	{\rm(1)} If $\lambda>1$, then
	\begin{equation*}
	X(t) \leq \begin{cases}
	\left(\frac{\alpha}{\beta} \right)^{\frac{1}{\lambda}}, & \text{if }X_0\le \left(\frac{\alpha}{\beta} \right)^{\frac{1}{\lambda}}, \\	
	\left(\frac{\alpha}{\beta} \right)^{\frac{1}{\lambda}}+ \left( \left( X_0-\left(\frac{\alpha}{\beta} \right) ^{\frac{1}{\lambda}}\right)^{1-\lambda}+ \beta(\lambda-1)t
	\right) ^{\frac{1}{1-\lambda}} ,
	& \text{if }X_0 > \left(\frac{\alpha}{\beta} \right)^{\frac{1}{\lambda}}.
	\end{cases}
	\end{equation*}
	
	{\rm(2)} If $0<\lambda<1$, then
	\begin{equation*}
	X(t) \leq \begin{cases}
	\left(\frac{\alpha}{\beta} \right)^{\frac{1}{\lambda}}, & \text{if }X_0\le \left(\frac{\alpha}{\beta} \right)^{\frac{1}{\lambda}}, \\	
	\frac{\alpha}{\beta}X_0^{1-\lambda}+ \left( X_0-\frac{\alpha}{\beta}X_0^{1-\lambda}\right) e^{-\frac{\beta t}{X_0^{1-\lambda}}},
	& \text{if }X_0 > \left(\frac{\alpha}{\beta} \right)^{\frac{1}{\lambda}}.
	\end{cases}
	\end{equation*}
	
\end{lemma}

Furthermore, for the differential inequality \eqref{05} with a time-dependent inhomogeneous term, we establish bilateral estimates of the solution.  To begin, the following comparison principle is introduced.
\begin{lemma}\label{Lem01}  Let $X(t)$ be the  non-negative solution of
	\begin{equation}\label{02}
	\begin{cases}
	X'(t)+\beta X^{\lambda}(t)= \alpha(t)\geq0,\quad t>t_{0}\geq0,\\
	X(t_{0})=X_{t_0}>0.
	\end{cases}
	\end{equation}
	And
	$X_{1}(t)$ be the  solution of
	\begin{equation}\label{01}
	\begin{cases}
	X_{1}'(t)+\beta X_{1}^{\lambda}(t)=0,\quad t>t_{0}\geq0,\\
	X(t_{0})=X_{t_0}>0.
	\end{cases}
	\end{equation}	
	Then for any $t\geq t_{0}$,
	\begin{equation}\label{com}
	X(t)\geq X_{1}(t)=\begin{cases}
	X_{t_0}e^{-\beta (t-t_0)},  &\text{~if~}\lambda=1,\\
	X_{t_0}\big[1+(\lambda-1)\beta X_{t_0}^{\lambda-1}(t-t_{0})\big]^{\frac{1}{1-\lambda}},  ~&\text{~if~} \lambda \neq 1.
	\end{cases}
	\end{equation}
\end{lemma}

\begin{proof}
	First,   since the equation \eqref{01}$_1$ is separable, it is not hard to obtain the right side of  \eqref{com} by solving equation $X_1^{-\lambda}X'_1=-\beta.$
	Next, set $h(t)=X(t)-X_{1}(t),$ then $h(t_{0})=0.$ We claim that
	\begin{center}
		$h(t)\geq0,$  $~~~\forall t\geq t_{0}.$
	\end{center}
	If not, there exists $t_{1}>t_{0}$ satisfying $h(t_{1})<0$. Define $t_{2}=\sup\{t\in[0,t_{1}),h(t)=0\}.$ Hence
	\begin{center}
		$h(t_{2})=0$ and $h(t)<0,$  $~~~\forall t_{2}<t<t_{1}$.
	\end{center}
	Therefore, it is easy to prove that
	\begin{center}
		$h^{'}(t)=\alpha(t)+\beta X_{1}^{\lambda}-\beta  X^{\lambda}\geq0$,  $~~~\forall t_{2}<t<t_{1},$
	\end{center}
	which implies
	$$h(t)\geq h(t_{2})=0.$$
	This is a contradiction. The proof is complete.
\end{proof}


Based on Lemma \ref{Lem01} stated above, we derive 
the following key lemma.

\begin{lemma}\label{inequ}
	If $\alpha(t)>0$, $\beta>0$,  then the non-negative solution $X(t)$ to 
	\begin{equation}\label{05}
	\begin{cases}
	X'(t)  +\beta X^\lambda(t) \le \alpha(t), \quad t> 0 ,\\
	X(0)=X_0>0,
	\end{cases}
	\end{equation}
	satisfies 
	satisfies
	\begin{align*}
	X(t)\leq X_0+\int_{0}^{\infty} \alpha(t)\, dt := M_{0}<+\infty.
	\end{align*}
	Further, the solution $X(t)$ 
	satisfies the following inequality:
	
	{\rm(1)} If $\lambda>1$, then
	\begin{align}\label{44}
	X_0&\left( 1+(\lambda-1)\beta X_0^{\lambda-1}t\right) ^{\frac{1}{1-\lambda}} \nonumber\\
	&\qquad\le X(t) \leq
	M_{0}\left(  1+\beta (\lambda-1)(1-\kappa)  M_{0}^{\lambda-1}t\right)  ^{\frac{1}{1-\lambda}}
	+\int_{\kappa t}^{t}\alpha(r)\,dr.
	\end{align}

	(2) If $ 0<\lambda\le 1$, then
	\begin{align}\label{44.1}
	X_0e^{-\beta M_{0}^{\lambda-1} t}
	\le X(t)
	\leq M_{0}e^{-\beta(1-\kappa) M_{0}^{\lambda-1}t}
	+ \int_{\kappa t}^{t}\alpha(r)e^{-\beta(1-\kappa) M_{0}^{\lambda-1}(t-r)}\,dr,
	\end{align}	
	where $\kappa\in (0,1).$

\end{lemma}

\begin{proof}
	First, integrating the equation $(\ref{05})_1$ from $0$ to $t$, we get
	\begin{align*}
	X(t)\le X_{0}+\int_0^{t}\alpha(r)\,dr-\beta\int_0^{t}X^{\lambda}(r)\,dr
	\leq X_{0}+\int_{0}^{\infty}\alpha(r)\,dr:=M_{0}<+\infty.
	\end{align*}

	Next, we prove \eqref{44} and \eqref{44.1}, there are two cases.
	{\bf Case 1.} If $\lambda>1$, for any $t>0$, let $X_{2}(t)$ be the solution of
	\begin{equation}\label{04}
	\begin{cases}
	X_{2}^{'}(r)+\beta X_{2}^{\lambda}(r)=0,\quad r\geq\kappa t,\\
	X_{2}\left( \kappa t\right) =X(\kappa t),
	\end{cases}
	\end{equation}
	where $\kappa\in (0,1).$ 
	From $X^{'}(r)\le-\beta X^{\lambda}(r)+\alpha(r)$ and Lemma \ref{Lem01}, then we have
	\begin{align*}
	X(t)-X(\kappa t)&=\int_{\kappa t}^{t}X^{'}(r)\,dr
	\le-\beta \int_{\kappa t}^{t}X^{\lambda}(r)\,dr+\int_{\kappa t}^{t}\alpha(r)\,dr\\
	&\leq\int_{\kappa t}^{t}\alpha(r)\,dr-\beta \int_{\kappa t}^{t}X_{2}^{\lambda}(r)\,dr
	=\int_{\kappa t}^{t}\alpha(r)\,dr+\int_{\kappa t}^{t}X_{2}^{'}(r)\,dr\\
	&=X_{2}(t)-X_{2}(\kappa t)+\int_{\kappa t}^{t}\alpha(r)\,dr,
	\end{align*}
	which implies
	\begin{align}\label{y1.1}
	X(t)\leq X_{2}(t)+\int_{\kappa t}^{t}\alpha(r)\,dr.
	\end{align}
	Noting that
	\begin{align}\label{y1.2}
	X_{2}(t)&=X(\kappa t)\left[ 1+\beta(\lambda-1)(1-\kappa) X^{\lambda-1}(\kappa t)t\right] ^{\frac{1}{1-\lambda}}\leq M_{0}\left[ 1+\beta(\lambda-1)(1-\kappa) M_{0}^{\lambda-1} t\right] ^{\frac{1}{1-\lambda}}.
	\end{align}
	Combining \eqref{y1.1}, \eqref{y1.2}, and Lemma \ref{Lem01}, we conclude
	$$	X_0\big[1+(\lambda-1)\beta X_0^{\lambda-1}t\big]^{\frac{1}{1-\lambda}} \le X(t)\leq
	M_{0}\left[ 1+\beta (\lambda-1)(1-\kappa)  M_{0}^{\lambda-1}t\right] ^{\frac{1}{1-\lambda}}
	+\int_{\kappa t}^{t}\alpha(r)\,dr.$$
	
	{\bf Case 2.} If $0<\lambda\le 1$, then for any $t>0$, noticing that $X^{\lambda}(t)=X^{\lambda-1}(t)X(t)\geq M_{0}^{\lambda-1}X(t)$, we get
	\begin{equation}\label{06}
	\begin{cases}
	X^{'}(t)+\beta M_{0}^{\lambda-1}X(t)\leq \alpha(t),\\
	X(\kappa t)=X(\kappa t)>0.
	\end{cases}
	\end{equation}
	Since $(\ref{06})_1$ is a linear differential inequality, it follows from Lemma \ref{Lem01} that there exists an explicit formula for the solution
	\begin{align*}
	X_0e^{-\beta M_{0}^{\lambda-1} t}
	\le X(t)
	\leq X(\kappa t)e^{-\beta(1-\kappa) M_{0}^{\lambda-1}t}
	+ \int_{\kappa t}^{t}\alpha(r)e^{-\beta(1-\kappa) M_{0}^{\lambda-1}(t-r)}\,dr.
	\end{align*}
	The proof of Lemma \ref{inequ} is complete.

\end{proof}

\section{Time-and space-dependent forces $S(t, x)$}

In this section,  we regularize the degenerate term of the thin-film equation and discretize it in time to establish the existence of solutions to the steady-state problem.
Subsequently, the existence of solutions to the regularized problem is obtained via energy/entropy estimates and weak convergence techniques.
Furthermore, we establish a relationship between the dissipation functional  and the energy functional  (Lemma \ref{2007lem}).
Building on this, a refined analysis of the obtained energy inequality enables us to obtain obtain novel bilateral estimates for the convergence rate of the solution.

\subsection{Regularized problem}
To prove Theorem \ref{ex1}, we consider the following regularized problem:
\begin{equation}\label{Regular1}
\begin{cases}
\partial_t u + \partial_x\left(f_\epsilon(u) \partial_x I(u) \right) = S(t, x),~~&(t,x) \in (0,T)\times \Omega,\\
\partial _x u =0,~f_\epsilon(u) \partial_x I(u) = 0,~~&(t,x) \in (0,T) \times \partial \Omega,\\
u(0, x)=u_{0}(x),~~&x \in \Omega,
\end{cases}
\end{equation}
where $f_\epsilon(z) = z_+^n+\epsilon$, $\epsilon > 0$  and $0 < s < 1$. 
For problem \eqref{Regular1}, we have the following result.
\begin{proposition}\label{exist.regular}
	Let $n \ge 1$, $s \in (0,1)$.  
	If $u_0 \in H^{s}(\Omega)$, $S(t, x) \in L^1(0, \infty;H^s(\Omega) ) \cap  L^1(0, \infty;L^1(\Omega) )$, then for all $T > 0$, there exists the solution $u_\epsilon$ to problem \eqref{Regular1} satisfying
	$$u_\epsilon \in L^\infty(0, T; H^{s}(\Omega)) \cap L^2(0, T; H^{2s + 1}_{N}(\Omega)) $$
	and
	\begin{align}\label{exist.defi}
	\iint_{\Omega_T} u_\epsilon \partial_t \varphi  \,dx dt + \iint_{\Omega_T} f_\epsilon(u_\epsilon) \partial_x I(u_\epsilon) \partial_x \varphi \,dx dt = -\int_{\Omega} u_0 \varphi  (0,\cdot) \,dx - \iint_{\Omega_T} S(t, x) \varphi  \,dx dt,
	\end{align}
	for all $\varphi \in C^1(0, T; H^1(\Omega))$ with support in $(0, T) \times \bar{\Omega}$.
	
	Further, for almost every $t \in (0, T)$, $u_\epsilon$ fulfills
	\begin{equation}\label{exist.mass}
	\int_{\Omega} u_\epsilon(t, x) \, dx = \int_{\Omega} u_0(x)\,dx + \int_{0}^t  \int_{\Omega} S(r, x) \,dxdr,
	\end{equation}
	and
	\begin{equation}\label{exist.energy}
	[ u_\epsilon ]^2_{s,\Omega} + 2\int_{0}^{T} \int_{\Omega} f_\epsilon(u_\epsilon) \left| \partial_{x} I(u_\epsilon)\right| ^{2}  \,dx dt \leq [ u_0 ]^2_{s,\Omega} -2 \int_{0}^{T} \int_{\Omega} S(t, x) I(u^\epsilon)  \, dx dt.
	\end{equation}
	
	Additionally,  the following entropy estimate holds
	\begin{align}\label{exist.entropy}
	\int_{\Omega} G_\epsilon(u_{\epsilon}) \, dx + \int_0^{t} [ u_\epsilon]_{s+1,\Omega,N}^{2} \, dr \leq \int_{\Omega} G_\epsilon(u_0) \, dx
	+\int_{0}^{t} \int_{\Omega} S(r, x) G'_\epsilon(u_\epsilon)  \, dx dr,
	\end{align}
	where $G_\epsilon$ is a non-negative function such that $G''_\epsilon(z)=\frac{1}{f_\epsilon(z)}.$
\end{proposition}

The proof of Proposition \ref{exist.regular} is divided into two steps. In the next subsection, we consider the steady-state problem associated with \eqref{Regular1} and use pseudo-monotone operator theory to prove the existence of its solutions. Then, in the third subsection, we derive necessary a priori estimates via entropy/energy estimates  and apply weak convergence techniques to establish the global existence of solutions to the regularized problem \eqref{exist.regular}.

\subsection{Steady-state problem}
In this subsection, we consider the steady-state problem
\begin{equation}\label{Stationary}
\begin{cases}
u + \tau \partial_{x} \left( h(u) \partial_{x} I(u) \right)-\tau S(x) =g,  ~~& \text{in } \Omega,\\
\partial_{x} u = 0, \quad \partial_{x} I(u) = 0,  ~~&\text{on } \partial \Omega,
\end{cases}
\end{equation}
where $\tau > 0,$ $g \in H^{s}(\Omega)$,   $h(s):(-\infty,+\infty)\rightarrow(0,+\infty)$ is a continuous function which is bounded below by a constant $c>0.$
The following proposition will provide us some results about problem \eqref{Stationary}.
\begin{proposition}\label{Stationary.pro}
	If $g \in H^{s}(\Omega)$, $S(x) \in H^{s}(\Omega)$, then there exists $u \in H^{2s + 1}_{N}(\Omega)$ of problem \eqref{Stationary} which satisfies for all $\varphi \in H^{1}(\Omega)$,
	\begin{align}\label{Stationary.defi}
	\int_{\Omega} u \varphi  \,dx - \tau \int_{\Omega} h(u) \partial_{x} I(u) \partial_{x} \varphi  \,dx -\tau \int_{\Omega} S(x) \varphi \,dx= \int_{\Omega} g \varphi  \,dx.
	\end{align}
	Meanwhile, $u$ satisfies
	\begin{align}\label{Stationary.mess}
	\int_{\Omega} u(x)  \,dx = \int_{\Omega} g(x)  \,dx+\tau \int_{\Omega} S(x)\, dx,
	\end{align}
	and
	\begin{align}\label{Stationary.energy}
	[u]^2_{s,\Omega} + 2\tau \int_{\Omega} h(u) \left| \partial_{x} I(u)\right| ^{2} \, dx \leq [ g ]^2_{s,\Omega} -2\tau \int_{\Omega} S(x)I(u) \,dx.
	\end{align}
	Further,  $u$ also satisfies
	\begin{equation}\label{Stat.entropy}
	\int_{\Omega} H(u) \,dx + \tau [u]^2_{s+1,\Omega,N}
	\le \int_{\Omega} H(g) \,dx +\tau \int_{\Omega}S(x)H'(u) \, dx,
	\end{equation}
	where $H(u)$ is a non-negative function with $H'' (z) = \frac{1}{h(z)}$.	
\end{proposition}

\begin{proof}
	Following the approach in \cite[Proposition  6]{CIAM}, we apply the Lemma \ref{RT2} to recover all test functions from $H^1(\Omega) $ by considering
	$$
	\varphi = -I(v) + \frac{1}{|\Omega|} \int_{\Omega} v \, dx
	$$
	for some function $ v \in H^{2s + 1}_N(\Omega) $. Thus,  \eqref{Stationary.defi} becomes
	\begin{align}\label{Stationary.def1}
	-\int_{\Omega} u I(v) \, dx &+ \frac{1}{|\Omega|}\left( \int_{\Omega} u \, dx \right) \left( \int_{\Omega} v \, dx \right) + \tau \int_{\Omega} h(u) \partial_x I(u) \partial_x I(v) \, dx  \nonumber\\
	&= -\int_{\Omega} g I(v) \, dx + \frac{1}{|\Omega|} \left( \int_{\Omega} g \, dx \right) \left( \int_{\Omega} v \, dx \right) \nonumber\\
	&\quad-\tau \int_{\Omega} SI(v) \,dx + \frac{\tau}{|\Omega|} \left( \int_{\Omega} S \, dx \right) \left( \int_{\Omega} v \, dx \right).
	\end{align}
	Then, we introduce the following nonlinear operator
	\begin{equation*}
	\Gamma(u)(v) = -\int_{\Omega} u I(v) \, dx + \frac{1}{|\Omega|}\left( \int_{\Omega} u \, dx \right) \left( \int_{\Omega} v \, dx \right) + \tau \int_{\Omega} h(u) \partial_x I(u) \partial_x I(v) \, dx,
	\end{equation*}
	for all $ u, v \in H^{2s + 1}_N(\Omega)$. 
	It is not difficult to verify that $\Gamma(u)(v)$ is a continuous, coercive and pseudo-monotone operator; details are given in Appendix A.  
	Further, we define the functional $ T_g $ by the following form
	\begin{align*}
	T_g(v) = &-\int_{\Omega} gI(v) \, dx + \frac{1}{|\Omega|}\left( \int_{\Omega} g \, dx \right) \left( \int_{\Omega} v \, dx \right)\\
	&-\tau \int_{\Omega} SI(v) \,dx + \frac{\tau}{|\Omega|} \left( \int_{\Omega} S \, dx \right) \left( \int_{\Omega} v \, dx \right),
	\end{align*}
	for $ v \in H^{2s + 1}_N(\Omega) $, is a linear form on $ H^{2s + 1}_N(\Omega) $. Consequently, according to \eqref{Stationary.def1},
	problem \eqref{Stationary} simplifies to the following:
	\begin{equation}\label{Stationary.pro1}
	\begin{cases}
	\text{Let } V = H^{2s + 1}_N(\Omega). \\
	\Gamma : V \rightarrow V^* \text{ coercive, continuous and pseudo-monotone.} \\
	T_g \in V^*. \\
	\text{Find } u \in H^{2s + 1}_N(\Omega) \text{ such that } \Gamma(u) = T_g \text{ in } V^*.
	\end{cases}
	\end{equation}
	The theory of pseudo-monotone operators \cite{JLL} implies for \eqref{Stationary.pro1} so for all $g \in H^s(\Omega)$, there exists $ u \in H^{2s + 1}_N(\Omega) $ such that
	$$\Gamma(u)(v) = T_g(v) \quad \text{in } V^*.$$
	Then, by utilizing Lemma \ref{RT2}, we derive that there exists $u\in H^{2s + 1}_N(\Omega) $ that satisfies \eqref{Stationary.defi} for all $\varphi \in H^1(\Omega)$.
	
	Below, we explore the properties of $u$. First by taking $\varphi = 1$ in \eqref{Stationary.defi} we obtain mass equation \eqref{Stationary.mess}. Next, take $v = u - \frac{1}{|\Omega|}\int_{\Omega} u \, dx$ in \eqref{Stationary.def1}, we get
	\begin{align*}
	&[u]^2_{s,\Omega} + \tau \int_{\Omega} h(u)|\partial_x I(u)|^2  \, dx
	= - \int_{\Omega} g I(u) \, dx
	-\tau  \int_{\Omega} S I(u) \, dx  \nonumber\\
	\leq &\,[g]^2_{s,\Omega} [u]_{s,\Omega} ^2 -\tau  \int_{\Omega} S I(u) \, dx
	\leq \frac{1}{2} [g]^2_{s,\Omega} + \frac{1}{2} [u]^2_{s,\Omega}  -\tau  \int_{\Omega} S I(u) \, dx ,
	\end{align*}
	which corresponds to  \eqref{Stationary.energy}. Lastly, since $H'$ is smooth with $H'$ and $H''$ are bounded, and $\Omega$ is bounded so we can take $\varphi = H'(u) \in H^1(\Omega)$ in \eqref{Stationary.defi} to obtain
	\begin{equation*}
	\int_{\Omega} u H'(u) \, dx - \tau \int_{\Omega} h(u) \partial_x I(u) \partial_x u H''(u) \, dx = \int_{\Omega} g H'(u) \, dx
	+ \tau  \int_{\Omega} S(x) H'(u) \, dx.
	\end{equation*}
	According to Proposition \ref{forall}, we further get
	\begin{equation*}
	\tau [u]^2_{s+1,\Omega,N}
	=  \int_{\Omega} H'(u)(g+\tau S(x)-u) \, dx
	\leq \int_{\Omega} (H(g) - H(u)) \, dx +\tau \int_{\Omega}S(x)H'(u) \, dx,
	\end{equation*}
	where $H$ is convex, so we deduce \eqref{Stat.entropy}.
	The proof of the Proposition \ref{Stationary.pro} is complete.
\end{proof}

\subsection{Proof of Proposition \ref{exist.regular}}
In this subsection, we establish the existence of  solutions to the regularized problem \eqref{Regular1}.
We begin to discrete problem \eqref{Regular1} with respect to $t$ and construct a  step function
$$u^\tau_\epsilon(t, x) = u^k(x), \quad \text{for } t \in [k\tau, (k+1)\tau), ~k \in \{0, \dots, N-1\}, ~\tau = \frac{T}{N}, $$
where $u^{k+1}(x)$ solves
\begin{align}\label{uk1}
u^{k+1} + \tau \partial_{x} \left( f_{\epsilon}(u^{k+1}) \partial_{x} I(u^{k+1}) \right) -\tau S^{k+1}(x)= u^k,
\end{align}
where $f_{\epsilon}(z) = z^n_++\epsilon, \epsilon > 0,$ and
$$S^\tau (t,x)= S^{k}(x) := S (k\tau,x), \quad \text{for } t \in [k\tau, (k+1)\tau), ~k \in \{0, \dots, N-1\}.  $$
The existence of the $u^k$ follows from Proposition \ref{Stationary.pro} with $h(z)=f_{\epsilon}(z)$ by induction on $k$ with $u^0 = u_0$.
We deduce the following result concerning the step function $u^\tau_\epsilon$.
\begin{corollary}\label{Stationary.ex}
	If $N > 0$ and $u_0 \in H^{s}(\Omega)$, $S^\tau(t, x) \in L^1(0, \infty;H^s(\Omega) ) \cap  L^1(0, \infty;L^1(\Omega) )$, then there exists a function $u^\tau_\epsilon \in L^\infty(0, T; H^{s}(\Omega))$ satisfying
	
	\noindent{\rm 1.} $t  \longrightarrow u^\tau_\epsilon(t,x) \text{ is constant on } [k\tau, (k+1)\tau), k \in \{0, \ldots, N-1\} \text{ and } \tau = \frac{T}{N}.$
	
	\noindent{\rm 2.} $u^\tau_\epsilon = u_0 \text{ on } [0, \tau) \times \Omega.$
	
	\noindent{\rm 3.} For all $t \in (0, T)$,
	\begin{align}\label{Stationary.mass1}
	\int_{\Omega} u^\tau_\epsilon(t, x)  \,dx = \int_{\Omega} u_0(x)  \,dx+ \tau \left[ \frac{t}{\tau} \right]  \int_{\Omega} S^\tau(t, x)\, dx  ,
	\end{align}
	where $\left[ \frac{t}{\tau} \right]$ denotes the greatest integer less than or equal to  $\frac{t}{\tau} $.	
	
	\noindent{\rm 4.} For all $\varphi \in C^1_c(0, T; H^1(\Omega))$,
	\begin{align}\label{Stationary.defi1}
	\iint_{\Omega _{\tau, T}}  \frac{u^\tau_\epsilon-S_\tau u^\tau_\epsilon}{\tau} \varphi  \,dx dt = \iint_{\Omega _{\tau, T}} f_\epsilon(u^\tau_\epsilon) \partial_{x} I(u^\tau_\epsilon) \partial_{x} \varphi  \,dx dt + \iint_{\Omega _{\tau, T}} S^\tau(t, x) \varphi \, dx dt,
	\end{align}
	where $S_{\tau} u^{\tau}_\epsilon(t, x) = u^{\tau}_\epsilon(t - \tau, x)$ and $\Omega_{\tau, T} = (\tau, T) \times \Omega.$
	
	\noindent{\rm 5.} For all $t \in (0, T)$,
	\begin{align}\label{Stationary.energy1}
	[u^\tau_\epsilon ]^2_{s,\Omega}  + 2\iint_{\Omega _T} f_\epsilon(u^\tau_\epsilon) \left| \partial_{x} I(u^\tau_\epsilon)\right| ^{2}  \,dx dt \leq [ u_0 ]^2_{s,\Omega}  - 2\iint_{\Omega _T} S^\tau(t, x) I(u^\tau_\epsilon)  \, dx dt.
	\end{align}
	
	\noindent{\rm 6.} For all $t \in (0, T)$,
	\begin{align}\label{Stationary.entropy1}
	\int_{\Omega} G_\epsilon(u^\tau_\epsilon) \, dx + \int_0^{t} [ u^\tau_\epsilon(r, \cdot) ]^2_{s+1,\Omega,N} \, dr \leq \int_{\Omega} G_\epsilon(u_0)  \,dx
	+\int_{0}^{t} \int_{\Omega} S^\tau(r, x) G'_\epsilon(u^\tau_\epsilon)   \,dx dr.
	\end{align}
\end{corollary}

{\bf Proof of Proposition \ref{exist.regular}.}
Some ideas of this proof come from \cite{CIAM,RT}. To make readers easily follow the outline of our proof, we will divide the proof  into  four steps.

\textbf{Step 1: A priori estimates.} We summarize the a priori estimates for $u^{\tau}_\epsilon$ with respect to $\tau$. First, we may claim there exists a constant $C>0$ independent of $\tau$ such that the solution $ u^{\tau}_\epsilon $ satisfying
\begin{align}
&\label{zxc1}\|u^{\tau}_\epsilon\|_{L^{\infty}(0,T;H^{s}(\Omega))} \leq C,\\
&\label{zxc2}\sqrt{ \epsilon } \|\partial_{x} I(u^{\tau}_\epsilon)\|_{L^{2}(0,T;L^{2}(\Omega))} \leq C, \\
&\label{zxc3}\left\|\frac{u^{\tau}_\epsilon - S_{\tau} u^{\tau}_\epsilon}{\tau}\right\|_{L^{2}(\tau,T;W^{-1,r}(\Omega))} \leq C,
\end{align}
where $ C $ does not depend on $ \tau > 0 $ and $r = \frac{2p}{p+2}$.

Indeed, estimates \eqref{Stationary.mass1} and \eqref{Stationary.energy1} indicate that $u^\tau_\epsilon$ is bounded in $L^\infty(0, T; H^{s}(\Omega))$ and $\partial_x I(u^\tau_\epsilon)$ is bounded in $L^2(0, T; L^2(\Omega))$.
Noting that $n \ge 1$ and the Lipschitz continuity of the function $f_\epsilon$, we know that $f_\epsilon(u^\tau_\epsilon)$ is bounded in $L^\infty(0, T; H^{s}(\Omega))$. By the Sobolev embedding theorem, we deduce that $f_\epsilon(u^\tau_\epsilon)$ is also bounded in $L^\infty(0, T; L^p(\Omega))$ with $p < \frac{2}{1 - 2s}$, if $0 < s \le  \frac{1}{2}$; $p =\infty$, if $\frac{1}{2} < s <1$, which indicates that $f_\epsilon(u^\tau_\epsilon) \partial_x I(u^\tau_\epsilon)$ is bounded in $L^2(0, T; L^r(\Omega))$ with  $\frac{1}{r} = \frac{1}{2} + \frac{1}{p}$.

Further, we fix $v \in W^{1,r'}(\Omega)$ with $\|v\|_{W^{1,r'}(\Omega)} \leq 1$.  By \eqref{uk1}, for any $t \in (0, T)$,  we have   	 $$\int_\Omega \frac{u^{\tau}_\epsilon - S_{\tau} u^{\tau}_\epsilon}{\tau} v \, dx = \int_\Omega f_\epsilon(u^\tau_\epsilon) \partial_{x} I(u^\tau_\epsilon) \partial_{x} v \, dx + \int_\Omega S^\tau(t, x) v \, dx.  $$
Since $\|v\|_{W^{1,r'}(\Omega)} \leq 1$, the last equality implies that  	
$$\int_\Omega \frac{u^{\tau}_\epsilon - S_{\tau} u^{\tau}_\epsilon}{\tau} v \, dx
\le \|f_\epsilon(u^\tau_\epsilon) \partial_{x} I(u^\tau_\epsilon)\|_{r}  + \|S^\tau(t, x)\|_{2} .  $$  	
Therefore, by the definition of dual norm,  we obtain	
$$  \left\|\frac{u^{\tau}_\epsilon - S_{\tau} u^{\tau}_\epsilon}{\tau}\right\|_{L^{2}(\tau,T;W^{-1,r}(\Omega))} \leq C,
$$
for a constant $C$ independent of $\tau$.

\textbf{Step 2: Compactness result.}
With the help of the following embedding relationships
\begin{equation}\label{embedding}
\begin{cases}
H^{s}(\Omega) \hookrightarrow L^p(\Omega) \hookrightarrow W^{-1,r}(\Omega),~~& \text{~if~} 0 < s \le  \frac{1}{2},\\
H^{s}(\Omega) \hookrightarrow C^{\frac{2s-1}{2}}(\Omega) \hookrightarrow W^{-1, 2}(\Omega),~~&\text{~if~} \frac{1}{2} < s <1,
\end{cases}
\end{equation}
and Aubin's lemma, it is not hard to verify that $u^\tau_\epsilon$ is relatively compact in $C(0, T; L^p(\Omega))$ with $p < \frac{2}{1 - 2s}$ if $0 < s \le  \frac{1}{2}$   and $C(0, T; C^{\frac{2s-1}{2}}(\Omega))$  if $\frac{1}{2} < s <1$.

Moreover, we observe that  $u^\tau_\epsilon$ is bounded in $L^2(0, T; H^{2s + 1}_N(\Omega))$ from \eqref{Stationary.energy1}. From the following embedding relationships
$$
H^{2s + 1}_N(\Omega) \hookrightarrow H^{s+1}_N(\Omega) \hookrightarrow W^{-1, r}(\Omega),
$$
we conclude that $u^\tau_\epsilon$ is relatively compact in $L^2(0, T; H^{s+1}_N(\Omega))$. Therefore, we can extract a subsequence, also denoted $u^\tau_\epsilon$, such that when $\tau$ approaches zero, we have
$u_\epsilon \in L^\infty(0, T; H^s(\Omega)) \cap L^2(0, T; H^{s+1}_N(\Omega))$ satisfying
\begin{equation}\label{partial}
\begin{cases}
u^\tau_\epsilon \to u_\epsilon \text{~in~} L^2(0, T; H^{s+1}_N(\Omega)) \text{~strongly},\\
\partial_x I(u^\tau_\epsilon) \rightharpoonup \partial_x I(u_\epsilon) \text{~in~} L^2(0, T; L^2(\Omega))  \text{~weakly}.
\end{cases}
\end{equation}
And then, on the one hand, if $0 < s \le  \frac{1}{2}$, then we have
$$u_\epsilon^\tau \to u_\epsilon  \text{~almost everywhere in~} \Omega_T.$$
On the other hand, if $\frac{1}{2} < s <1$, then Egorov's theorem implies that for any $\eta>0$, there exists a set $E_\eta \subset \Omega_T$ with $|\Omega_T \setminus E_\eta| \le \eta$ such that
\begin{equation}\label{local}
u_\epsilon^\tau \to u_\epsilon \text{~uniformly~in~} E_\eta.
\end{equation}

\textbf{Step 3: Derivation of definition \eqref{exist.defi}.}
Let us take the limit in \eqref{Stationary.defi1}. We first have
\begin{align*}
\iint_{\Omega_{ \tau,T}}  \frac{u^\tau_\epsilon - S_\tau u^\tau_\epsilon}{\tau} &\varphi\, dx  dt
= \int_0^T \int_{\Omega} u^\tau_\epsilon(t, x) \frac{ \varphi(t, x) - \varphi(t+\tau, x) }{\tau} \, dxdt  \nonumber\\
&  -\frac{1}{\tau} \int_0^\tau\int_{\Omega} u^\tau_\epsilon(t, x) \varphi(t, x) \, dxdt + \frac{1}{\tau} \int_{T-\tau}^T\int_{\Omega} u^\tau_\epsilon(t, x) \varphi(t + \tau, x) \, dxdt  \nonumber\\
& \longrightarrow -\iint_{\Omega_T} u_\epsilon \partial_t\varphi \, dxdt - \int_{\Omega} u_\epsilon(0, x) \varphi(0, x) \, dx, ~~\text{as~}  \tau \to 0.
\end{align*}
For the nonlinear term, noting that  $u^\tau_\epsilon$ is bounded in $L^2(0, T; H^{2s + 1}_N(\Omega))$, and then applying embedding theorem, we deduce the following convergence:
\begin{align}
&I(u^\tau_\epsilon) \rightarrow I(u_\epsilon) \text{ in } L^2(0, T; L^q(\Omega)) \text{ for all } q < \infty, \label{Iu}\\
&\partial_{x}u^\tau_\epsilon \rightarrow \partial_{x} u_\epsilon \text{ in } L^2(0, T; L^p(\Omega)) \text{ for all } p < \frac{2}{1 - 2s}. \nonumber
\end{align}
On the one hand, for $0<s\le \frac{1}{2}$, since $u^\tau_\epsilon \rightarrow u_\epsilon$ in $C^0(0, T; L^p(\Omega))$ with $p < \frac{2}{1 - 2s}$ and $f_\epsilon$ is Lipschitz, we have
$$
f_\epsilon(u^\tau_\epsilon) \rightarrow f_\epsilon(u_\epsilon) \text{ in } C^0(0, T; L^p(\Omega)) \text{ for all } p < \frac{2}{1 - 2s}.  $$
\noindent For the term $(u^\tau_\epsilon)^{n-1}_+$, we have
\begin{equation*}
\begin{cases}
(u^\tau_\epsilon)^{n-1}_+ \rightarrow (u_\epsilon)^{n-1}_+ \text{ in } C^0(0, T; L^p(\Omega)) \text{ for all } p < \frac{2}{1 - 2s},~~& \text{~if~}n \geq 2,\\
(u^\tau_\epsilon)^{n-1}_+ \rightarrow (u_\epsilon)^{n-1}_+ \text{ in } C^0(0, T; L^{\frac{p}{n-1}}(\Omega)) \text{ for all } p < \frac{2}{1 - 2s},~~&\text{~if~} 1<n < 2.
\end{cases}
\end{equation*}
Thus,  integrating by parts and the convergence above yield
\begin{align*}
&\iint_{\Omega_T} f_\epsilon(u^\tau_\epsilon) \partial_x I(u^\tau_\epsilon) \partial_{x} \varphi
=- \iint_{\Omega_T} f_\epsilon(u^\tau_\epsilon) I(u^\tau_\epsilon) \partial^2_{xx}\varphi
- \iint_{\Omega_T} n(u^\tau_\epsilon)_+^{n-1} \partial_x u^\tau_\epsilon I(u^\tau_\epsilon) \partial_x \varphi \nonumber\\
\rightarrow &-\iint_{\Omega_T} f_\epsilon(u_\epsilon)  I(u_\epsilon) \partial_{xx}^2 \varphi - \iint_{\Omega_T} n (u_\epsilon)_+^{n-1} \partial_x u_\epsilon I(u_\epsilon) \partial_{x} \varphi
= \iint_{\Omega_T} f_\epsilon(u_\epsilon) \partial_x I(u_\epsilon) \partial_{x} \varphi .
\end{align*}

On the other hand, for $\frac{1}{2}<s<1$, \eqref{local} shows
$$
f_\epsilon(u^\tau_\epsilon) \partial_x \varphi  \to f_\epsilon(u_\epsilon) \partial_x \varphi  \text{ in } L^2(0, T; L^2(\Omega)) \text{ strongly.}
$$
Thus, we collect \eqref{partial} to get
$$
\iint_{\Omega_T} f_\epsilon(u^\tau_\epsilon) \partial_x I(u^\tau_\epsilon) \partial_x \varphi \, dx dt \to \iint_{\Omega_T} f_\epsilon(u_\epsilon) \partial_x I(u_\epsilon) \partial_x \varphi \, dx  dt.
$$
Once again, by means of the definition of $S^\tau(t, x)$, we have
$$\iint_{\Omega_T} S^\tau(t, x) \varphi \, dx dt \to \iint_{\Omega_T} S(t, x) \varphi \, dx  dt.$$

\textbf{Step 4: The properties of $u_\epsilon$.}
For the properties of $u_\epsilon$, we first observe that the mass equation \eqref{exist.mass} follows from \eqref{Stationary.mass1} by $u^\tau_\epsilon \to u_\epsilon$ in $L^\infty(0, T; L^1(\Omega))$.

Next, we note that $u^\tau_\epsilon$ is bounded in $L^\infty(0, T; H^s(\Omega))$, then from Fatou's lemma, we obtain
$$[u_\epsilon]_{s,\Omega} \leq \liminf_{\tau \to 0} \,[ u^\tau_\epsilon ]_{s,\Omega}.  $$
It follows from estimate \eqref{Stationary.energy1} and \eqref{Iu} that $\sqrt{f_\epsilon(u^\tau_\epsilon)} \partial_{x} I(u^\tau_\epsilon)$  weakly converges in $L^2(0, T; L^2(\Omega))$ and $I(u^\tau_\epsilon)$ strongly converges in $L^2(0, T; L^q(\Omega))$ for all $q < \infty$, then the lower semi-continuity allows us to conclude \eqref{exist.energy}.

The fact that  $G_\epsilon(u^\tau_\epsilon) $ almost everywhere converges to $G_\epsilon(u_\epsilon)$  and Fatou's lemma guarantee for almost every  $t \in (0, T)$
$$\int_\Omega G_\epsilon(u_\epsilon(t, x)) \, dx \le \liminf_{\tau \to 0} \int_\Omega G_\epsilon(u^\tau_\epsilon(t, x)) \, dx.  $$
Besides, the fact that $u^\tau_\epsilon$ is relatively compact in $L^2(0, T; H^{s+1}_N(\Omega))$ also yields
$$\int_0^t [ u_\epsilon(r) ]^2_{s+1, \Omega, N} \, dr = \lim_{t \to 0} \int_0^t [ u^\tau_\epsilon(r) ]^2_{s+1, \Omega, N} \, dr. $$
Thus, we claim that
\begin{align*}
\int_{0}^{t} \int_{\Omega}S^\tau(r, x) G'_\epsilon(u^\tau_\epsilon)   \,dx dr \longrightarrow \int_{0}^{t} \int_{\Omega} S(r,x) G'_\epsilon(u_\epsilon) \,  dx dr.
\end{align*}
Indeed, a simple computation shows
\begin{align*}
&\int_{0}^{t} \int_{\Omega} S^\tau(r, x)G'_\epsilon(u^\tau_\epsilon)  - S(r, x) G'_\epsilon(u_\epsilon) \, dx dr\\
= &\int_{0}^{t} \int_{\Omega} S^\tau(r, x) \left(G'_\epsilon(u^\tau_\epsilon)- G'_\epsilon(u_\epsilon)\right)\, dx dr
+ \int_{0}^{t} \int_{\Omega} G'_\epsilon(u_\epsilon) \left(S^\tau(r, x)- S(r, x)\right)\, dx dr\\
\le& \int_{0}^{t} \int_{\Omega} \frac{1}{\epsilon} |u^\tau_\epsilon-u_\epsilon| S^\tau(r, x) \, dx dr
+\int_{0}^{t} \int_{\Omega} \frac{1}{\epsilon} |u_\epsilon-A| \left(S^\tau(r, x)- S(r, x)\right) \, dx dr\\
\le &\,\frac{1}{\epsilon} \|u^\tau_\epsilon-u_\epsilon\|_{L^\infty(0,T;L^1(\Omega))}\|S^\tau(r, x)\|_{L^1(0,T;L^1(\Omega))}\\
&\quad+ \frac{1}{\epsilon} \|u_\epsilon-A\|_{L^\infty(0,T;L^1(\Omega))}\|S^\tau(r, x)-S(r, x)\|_{L^1(0,T;L^1(\Omega))}
\longrightarrow 0, ~~\text{as~} \tau \to 0.
\end{align*}
Hence, \eqref{Stationary.entropy1} implies \eqref{exist.entropy}.  The proof of the Proposition \ref{exist.regular} is complete.

\subsection{Proof of Theorem \ref{ex1}}
In this subsection, we establish uniform a priori estimates for the regularized solution, independent of the parameter $\epsilon$. 
Below, we present a complete proof of Theorem \ref{ex1}, which is divided into seven steps.

\textbf{Step 1: A priori estimates.} Similar to the proof of \eqref{zxc3}, we can get that $f_\epsilon(u_\epsilon)$ is bounded in $L^\infty(0, T; L^p(\Omega))$ with $p < \frac{2}{1 - 2s}$, if $0 < s \le  \frac{1}{2}$; $p =\infty$, if $\frac{1}{2} < s <1$.
But the difference is that we can not get $\partial_x I(u_\epsilon)$ boundedness, we can only get $f_\epsilon(u_\epsilon)^{\frac{1}{2}} \partial_x I(u_\epsilon)$ is bounded in $L^2(0, T; L^2(\Omega))$ from \eqref{exist.energy}, which implies that $f_\epsilon(u_\epsilon) \partial_x I(u_\epsilon)$ is bounded in $L^2(0, T; L^r(\Omega))$ where $\frac{1}{r} = \frac{1}{2} + \frac{1}{2p}$. Therefore, we obtain that
$$
\partial_t u_\epsilon = -\partial_x \left( f_\epsilon(u_\epsilon) \partial_x I(u_\epsilon) \right) +S(t,x) \text{ is bounded in } L^2(0, T; W^{-1, r}(\Omega)),
$$	
which indicates that $ u_\epsilon $  satisfies the following estimate
\begin{align*}	
\left\|\partial_t u_\epsilon\right\|_{L^{2}(0,T;W^{-1,r}(\Omega))} \leq C,
\end{align*}
where $ C $ does not depend on $ \epsilon > 0 $  and  $r=\frac{2p}{p+1}$.

\textbf{Step 2: Compactness result.}
The embedding relationships \eqref{embedding} and Aubin's lemma indicate that $u_\epsilon$ is relatively compact in $C(0, T; L^p(\Omega))$ with $p < \frac{2}{1 - 2s}$ if $0 < s \le  \frac{1}{2}$   and $C(0, T; C^{\frac{2s-1}{2}}(\Omega))$  if $\frac{1}{2} < s <1$.
Therefore, we can extract a subsequence, also denoted $u_\epsilon$, such that when $\epsilon$ approaches zero, we have
$u \in L^\infty(0, T; H^s(\Omega)) \cap L^2(0, T; H^{s+1}_N(\Omega))$ satisfying
\begin{equation*}
\begin{cases}
u_\epsilon \to u \text{~in~} C(0, T; L^p(\Omega)) \text{~strongly~for all~} p < \frac{2}{1 - 2s},  ~&\text{~if~} 0 < s \le  \frac{1}{2},\\
u_\epsilon \to u \text{~in~} C(0, T; C^{\frac{2s-1}{2}}(\Omega))  \text{~strongly},  ~&\text{~if~} \frac{1}{2} < s <1.
\end{cases}
\end{equation*}
And then, on the one hand, if $0 < s \le  \frac{1}{2}$, then we have
$$u_\epsilon \to u  \text{~almost everywhere in~} \Omega_T.$$
On the other hand, if $\frac{1}{2} < s <1$, then Egorov's theorem implies that for any $\eta>0$, there exists a set $E_\eta \subset \Omega_T$ with $|\Omega_T \setminus E_\eta| \le \eta$ such that
\begin{equation}\label{local1}
u_\epsilon \to u \text{~uniformly~in~} E_\eta.
\end{equation}

\textbf{Step 3: Derivation of definition \eqref{ex1.defi}.}
Let us pass to the limit in \eqref{exist.defi}. Let $\varphi \in \mathcal{D}([0, T) \times \bar{\Omega})$ satisfying $\partial_x \varphi = 0$ on $(0, T) \times \partial \Omega$. Since $u_\epsilon \to u$ in $C(0, T; L^1(\Omega))$, we have
$$  \iint_{\Omega_T} u_\epsilon \partial_t \varphi \, dxdt \to \iint_{\Omega_T} u \partial_t \varphi \, dx dt.  $$
Remark that \eqref{exist.energy}, we know
\begin{align}\label{epsilon}
\iint_{\Omega_T}  f_\epsilon(u_\epsilon) &\left| \partial_{x} I(u_\epsilon)\right| ^{2} \, dx dt\nonumber\\
&= \iint_{\Omega_T}  (u_\epsilon)^n_+ \left| \partial_{x} I(u_\epsilon)\right| ^{2} \, dx dt
+\iint_{\Omega_T}  \epsilon \left| \partial_{x} I(u_\epsilon)\right| ^{2}  \,dx dt
\leq C.
\end{align}
The Cauchy-Schwarz inequality and \eqref{epsilon} indicate that

$$\epsilon \iint_{\Omega_T} \partial_x I(u_ \epsilon) \partial_x \varphi \, dx dt \le c(\varphi) \sqrt{ \epsilon} \left( \sqrt{ \epsilon} \|\partial_x I(u_\epsilon)\|_2 \right) \to 0.  $$
Estimation \eqref{epsilon} also shows that $ (u_\epsilon)^{\frac{n}{2}}_+\partial_{x} I(u_\epsilon)$ is bounded in $L^2(0, T; L^2(\Omega))$.

In the last of this step, we will estimate the term $(u_\epsilon)^{\frac{n}{2}}$.  Indeed,  for $0 < s \le  \frac{1}{2}$, since the function $u \mapsto u^{\frac{n}{2}}$ is Lipschitz for $n\ge2$, we easily verify
$(u_ \epsilon)^{\frac{n}{2}}$ is bounded in $L^\infty(0, T; L^p(\Omega))$ with $p < \frac{2}{1-2s}$. Thus, $ (u_\epsilon)_+^n \partial_x I(u_\epsilon)$ is bounded in $L^2(0, T; L^m(\Omega))$, where $\frac{1}{m} = \frac{1}{2} + \frac{1}{p}$.
Different from the Lipschitz case, when $n<2$, we only prove that $(u_\epsilon)^{\frac{n}{2}}$ is bounded in $L^\infty(0, T; L^{\frac{2p}{n}}(\Omega))$ with $p < \frac{2}{1-2s}$. In short, we conclude that $ (u_\epsilon)_+^n \partial_x I(u_\epsilon)$ is bounded in $L^2(0, T; L^m(\Omega))$, where $\frac{1}{m} = \frac{1}{2} + \frac{n}{2p}$.  For  $\frac{1}{2} < s <1$, it is easy to verify that $ (u_\epsilon)_+^n \partial_x I(u_\epsilon)$ is bounded in $L^2(0, T; L^2(\Omega)).$

\textbf{Step 4: Equation for the flux $h$.}
Taking the limit of  \eqref{exist.defi} yields
$$  \iint_{\Omega_T} u \partial_t \varphi \, dx dt + \iint_{\Omega_T} h \partial_x \varphi  \, dx dt = -\int_\Omega u_0 \varphi(0, x) \, dx.  $$
We further claim that
$$  h = u_{+}^n \partial_x I(u),  $$
in the following sense:
\begin{align}\label{h}
\iint_{\Omega_T} h \varphi \, dx dt
= - \iint_{\Omega_T} n  u_{+}^{n-1} \partial_x u I(u)  \varphi \, dx dt
-\iint_{\Omega_T} u_{+}^n  I(u) \partial_x \varphi \, dx dt,
\end{align}
for all test functions $\varphi$ with $\varphi = 0$ on $(0, T) \times \partial \Omega$. This is
$$  h = \partial_x (u_{+}^n I(u)) - nu_{+}^{n-1} \partial_x u I(u) \text{ in } \mathcal{D}'(\Omega).  $$
Indeed, from \eqref{exist.energy}, we have
\begin{align}\label{uhs}
[u_\epsilon]_{s,\Omega}
\le [u_{0}]_{s,\Omega}
+ \int_0^t[S(r,x)]_{s,\Omega}\,dr.
\end{align}
Additionally, using the fractional Poincar\'e's  inequality and the mass equation \eqref{exist.mass}, we obtain
\begin{align}\label{u.1}
\|u_\epsilon\|_2&\le \|u_\epsilon-\bar{u}_\epsilon\|_2+\|\bar{u}_\epsilon\|_2\nonumber\\
&\le  C(\Omega)[u_\epsilon]_{s,\Omega}+\|\bar{u}_0\|_2+|\Omega|^{-\frac{1}{2}}\int_0^t\int_\Omega S(r,x)\,dxdr.
\end{align}
Hence, for  $s \in (\frac{1}{2},1)$,  the embedding $H^s(\Omega) \hookrightarrow L^{\infty}(\Omega)$ together with  \eqref{uhs} and \eqref{u.1} yields
\begin{align}\label{u-u}
u_\epsilon(t,x)&\le \|u_\epsilon\|_{H^s(\Omega)}\nonumber\\
&\le C\|u_0\|_{H^s(\Omega)}+\int_0^t[S(r,x)]_{s,\Omega}dr+C\int_0^t\int_{\Omega} S(r, x)\,dxdr \le A<+\infty.
\end{align}
From the definition of $G_\epsilon(u_\epsilon)$, it follows that
\begin{align*}
G'_\epsilon(u_\epsilon) =\int^{u_\epsilon}_{A}\frac{1}{f_\epsilon(z)}dz \le 0, ~~~\forall u_\epsilon \le A.
\end{align*}
If $S(t,x)\ge 0$, then from \eqref{exist.entropy} we derive
\begin{align}\label{entropy.V}
\int_{\Omega} G_\epsilon(u_\epsilon)  \,dx + \int_0^{t} [u_\epsilon]_{s+1,\Omega,N}^{2}  \,dr
&\leq \int_{\Omega} G_\epsilon(u_0) \, dx
+\int_{0}^{t} \int_{\Omega} S(r,x) G'_\epsilon(u_\epsilon) \,  dx dr\nonumber\\
&\leq \int_{\Omega} G_\epsilon(u_0) \, dx.
\end{align}
Further, with the help of the monotonicity of  $G_\epsilon$ with respect to $\epsilon$, we have
$$
\int_\Omega G_\epsilon(u_0) \, dx \leq \int_\Omega G(u_0) \, dx \leq C.  $$
Hence,  \eqref{entropy.V}  implies that $u_\epsilon$ is bounded in $L^2(0, T; H^{s+1}_N(\Omega))$. Since $\partial_t u_\epsilon$ is bounded in $L^2(0, T; W^{-1, r}(\Omega))$, applying Aubin's lemma yields
\begin{equation*}
\begin{cases}
u_\epsilon \text{ is relatively compact in } L^2(0, T; H^{s_1}_N(\Omega)) \text{ with } s_1 < s+ 1,\\
\partial_x u_\epsilon \text{ is relatively compact in } L^2(0, T; H^{s_2}_N(\Omega)) \text{ with } s_2 < s,  \\
I(u_\epsilon) \text{ is relatively compact in } L^2(0, T; H^{s_3}_N(\Omega)) \text{ with } s_3 < 1 - s.
\end{cases}
\end{equation*}
Therefore,  we can extract a subsequence such that
if $\frac{1}{2} < s <1$, then Egorov's theorem shows that for any $\eta>0$,  there exists  sets $E_\eta,~A_\eta \subset \Omega_T$ with $|\Omega_T \setminus E_\eta| \le \eta$ and $|\Omega_T \setminus A_\eta| \le \eta$ such that
\begin{equation}\label{u.local}
\begin{cases}
u_\epsilon \to u \text{~uniformly~in~} E_\eta,\\
\partial_x u_\epsilon \to \partial_x u \text{~uniformly~in~}  A_\eta, \\
I(u_\epsilon) \to I(u) \text{ in }  L^2(0, T; L^{\frac{2}{2s+2\varepsilon-1}}(\Omega)) \text{ with } \varepsilon>0.
\end{cases}
\end{equation}
Further, we write
\begin{align*}
\iint_{\Omega_T} h_\epsilon \varphi \, dx dt &= \iint_{\Omega_T} (u_\epsilon)_{+}^{n} \partial_x I(u_\epsilon) \varphi \, dx dt\nonumber\\
&= -\iint_{\Omega_T} n(u_\epsilon)_{+}^{n-1} \partial_x u_\epsilon I(u_\epsilon) \varphi \, dx dt - \iint_{\Omega_T} (u_\epsilon)_{+}^{n} I(u_\epsilon) \partial_x \varphi \, dx dt.
\end{align*}
From above convergence,  we can take the limit and obtain \eqref{h}.

\textbf{Step 5: The properties of $u$.}
Regarding the properties of $u$, taking the limit in \eqref{exist.mass} indicates the mass equation \eqref{ex1.mass} by $u_\epsilon \to u$ in $C(0, T; L^1(\Omega))$.

Next,  noting that $u_\epsilon$ is bounded in $L^\infty(0, T; H^s(\Omega))$, then Fatou's lemma implies
$$[u]_{s,\Omega}\leq \liminf_{\epsilon \to 0} \,[u_\epsilon]_{s,\Omega}.  $$
Estimate \eqref{exist.energy} also implies that $$g_\epsilon := (u_\epsilon)_+^{\frac{n}{2}} \partial_x I(u_\epsilon) \rightharpoonup g \text{~in~} L^2(0, T; L^2(\Omega))  \text{~weakly} ,$$
and then the lower semi-continuity of the norm suggests \eqref{ex1.energy}.
Now, we only prove that
\begin{align}\label{gep}
g = \partial_x\left(u_+^{\frac{n}{2}}I(u) \right) - \frac{n}{2}u_{+}^{\frac{n}{2} - 1} \partial_x u I(u) \text{ in } \mathcal{D}'(\Omega).
\end{align}
Indeed, for all $\varphi \in \mathcal{D}([0, T ) \times \bar{\Omega} ) $, integration by parts shows
\begin{align*}
\iint_{\Omega_T}  g_\epsilon \varphi \, dx dt
&= \iint_{\Omega_T}  \left(u_\epsilon\right)_+^{\frac{n}{2}} \partial_x I\left(u_\epsilon\right) \varphi \, dxdt\nonumber\\
&= -\iint_{\Omega_T}  \frac{n}{2} \left(u_\epsilon\right)_+^{\frac{n}{2}-1} \partial_x u_\epsilon I\left(u_\epsilon\right) \varphi \, dxdt
- \iint_{\Omega_T}  \left(u_\epsilon\right)_+^{\frac{n}{2}} I\left(u_\epsilon\right) \partial_x \varphi \, dx dt.
\end{align*}
Hence, using  \eqref{u.local} and $u_\epsilon \rightarrow u$ in $C(0, T; L^\infty(\Omega))$, we have \eqref{gep}.

Furthermore, we show that $u$ satisfies the entropy inequality \eqref{ex1.entropy}.
First of all, since $u_\epsilon \rightharpoonup u$ in $L^2(0, T; H^{s + 1}_N(\Omega))$,  by the lower semi-continuity of the norm, we obtain
$$
\|u\|_{L^2(0, T; H^{s + 1}_N(\Omega))} \leq \liminf_{\epsilon \to 0} \|u_\epsilon\|_{L^2(0, T; H^{s + 1}_N(\Omega))}.
$$
According to the facts that
$$ G_\epsilon (u_\epsilon) \to G(u) \text{ almost everywhere and } G_\epsilon(u_0) \leq G(u_0). $$
Then estimate \eqref{ex1.entropy} follows from \eqref{exist.entropy} by Fatou's lemma.

\textbf{Step 6: The non-negative  of  $u$.}
The non-negativity of the solution follows from the entropy estimate \eqref{entropy.V}. Indeed,  it follows from \eqref{entropy.V} that
\begin{align}\label{non1}
\int_{\Omega} G_{\epsilon}(u_{\epsilon})dx \leq \int_{\Omega} G_{\epsilon}(u_{0})dx \leq \int_{\Omega} G(u_{0})dx   < \infty,
\end{align}
which implies
\begin{align}\label{non2}
\limsup_{\epsilon \to 0} \int_{\Omega} G_{\epsilon}(u_{\epsilon})dx < \infty.
\end{align}
From the definition of $G_{\epsilon}(z)$, we know
\[
\lim_{\epsilon \to 0} G_{\epsilon}(-\xi) = +\infty,   ~~~~\forall \xi > 0.
\]
Recall that $u_{\epsilon}(t, \cdot)$ converges almost everywhere, 
then Egorov's theorem implies that  for any  $\eta>0$, there exists a set $E_{\eta} \subseteq \Omega$ with $|\Omega \setminus E_{\eta}| \leq \eta$ such that
\[
u_{\epsilon}(t, \cdot) \to u(t, \cdot) \text{ uniformly in } E_{\eta}.
\]
For some $\xi > 0$, we let
\[
Q_{\eta, \xi} = E_{\eta} \cap \{u(t, \cdot) \leq -2\xi\}.
\]
Then for every $\eta, \xi > 0$, there exists $\epsilon_{0}(\eta, \xi)$ such that 
$u_{\epsilon}(t, \cdot)\le -\eta$ in $Q_{\eta, \xi}$  for all $\epsilon\le \epsilon_{0}(\eta, \xi)$.

Next, we claim  that $Q_{\eta, \xi}$ has measure zero. Indeed, if not, then there exists $\epsilon_1\le \epsilon_{0}(\eta, \xi)$ such that
$u_{\epsilon_1}(t, \cdot)> -\eta$ for all $x\in Q_{\eta, \xi}.$
Further, we get
\[
G_{\epsilon_1}(u_{\epsilon_1}) \geq G_{\epsilon_1}(-\xi) \to +\infty,  ~~~\forall x\in Q_{\eta, \xi}.
\]
Therefore,  Fatou's lemma implies
\[
\liminf_{\epsilon_1 \to 0} \int_{Q_{\eta,\xi}} G_{\epsilon_1}(u_{\epsilon_1})dx \geq \int_{Q_{\eta,\xi}} \liminf_{\epsilon_1 \to 0} G_{\epsilon_1}(u_{\epsilon_1})dx = +\infty,
\]
which contradicts \eqref{non2}.

Hence, for all $\xi > 0$, $\eta > 0$, we know
\[
|\{u(t, \cdot) \leq -2\xi\}| \leq |Q_{\eta,\xi}| + |\Omega \setminus E_{\eta}| \leq \eta.
\]
It follows that
$$|\{u(t, \cdot) \leq -2\xi\}| = 0, ~~~\forall \xi > 0.$$
Furthermore, we obtain
\[
|\{u(t, \cdot) < 0\}| = \left|\bigcup_{k=1}^{\infty} \left\{u(t, \cdot) \leq -\frac{1}{k}\right\}\right|=0.
\]
Thus, we deduce  $u(t,x) \geq 0$ for almost every $x \in \Omega$ and for all $t > 0$.

\textbf{Step 7: The long-time behavior  of weak solutions.}
In this step, we will prove that $u$ satisfies the estimate \eqref{ex3.uK1}.
First from  \eqref{exist.energy}, we have
\begin{align}\label{exist.energy1}
\frac{d}{dt}[u_\epsilon]^2_{s,\Omega}
+2 \int_{\Omega} f_\epsilon(u_\epsilon) \left| \partial_{x} I(u_\epsilon)\right| ^{2}  dx
\le 2[S(t,x)]_{s,\Omega}  [u_\epsilon]_{s,\Omega}.
\end{align}
According to \eqref{non1}, we know
\begin{align}\label{xxx1}
\int_\Omega \frac{\left( u_\epsilon\right) ^2}{f_\epsilon(u_\epsilon) } \, dx
\le \int_\Omega G_\epsilon(u_\epsilon)\,dx \le \int_\Omega G(u_0)\,dx :=H_0< +\infty,
\end{align}
where $H_0$ is independent of $t$. Together with Lemma \ref{2007lem},  we further get
\begin{align}\label{xxx2}
\int_\Omega f_\epsilon(u_\epsilon)  |\partial_x (-\Delta)^s u_\epsilon|^2 \, dx
\geq  \frac{1}{4|\Omega|^2 H_0}\left( \int_\Omega | (-\Delta)^{\frac{s}{2}} u_\epsilon|^2 \,dx \right) ^2.
\end{align}	

Let us denote by
$$J(u_\epsilon) := \int_{\Omega} |(-\Delta)^{\frac{s}{2}} u_\epsilon|^2 \,dx.  $$
Then from \eqref{exist.energy1}, we deduce
\begin{align}\label{xxx3}
\frac{d}{dt} J(u_\epsilon)+ \frac{1}{2|\Omega|^2H_0} J^2(u_\epsilon)
\le 2[S(t,x)]_{s,\Omega}  J^{\frac{1}{2}}(u_\epsilon).
\end{align}
Multiplying both sides of \eqref{xxx3} by $J^{-\frac{1}{2}}(u_\epsilon)$, we get
\begin{align}\label{dissipation.zz}
\frac{d}{{d}t}[u_\epsilon]_{s,\Omega} 
+ \frac{1}{4|\Omega|^2H_0}[u_\epsilon]_{s,\Omega}^3
\le [S(t,x)]_{s,\Omega},~~~\forall t > 0.
\end{align}
According to \eqref{dissipation.zz} and Lemma \ref{inequ}, we take the limit as $\epsilon \rightarrow 0$ to get 
\begin{align}\label{E.1}
[u_0]_{s,\Omega} & \left( 1+ \frac{1}{2|\Omega|^2H_0} [u_0]_{s,\Omega}^2t \right)^{-\frac{1}{2}} \le [u]_{s,\Omega}\nonumber\\
&~~~~\le  M_0\left( 1+\frac{(1-\kappa)M_0^2}{2|\Omega|^2H_0} t \right) ^{-\frac{1}{2}}
+ \int_{\kappa t}^t [S(r,x)]_{s,\Omega}\, dr,~~~\forall t>0,
\end{align}
where $\kappa\in(0,1),$ $0<M_0:=[u_0]_{s,\Omega} + \int_0^\infty [S(r,x)]_{s,\Omega}\, dr <+\infty.$ 
Thus, from fractional Poincar\'e's  inequality and \eqref{E.1}, we have
\begin{align}\label{H1.1}
[u]_{s,\Omega} 
\le 	&\left\| u - \bar{u}_0 - \frac{1}{|\Omega|} \int_{0}^\infty \int_{\Omega}  S(r,x) \, dx dr\right\|_{H^s(\Omega)} \nonumber\\
\le & \left\| u - \bar{u}_0 - \frac{1}{|\Omega|} \int_{0}^t \int_{\Omega}  S(r,x) \, dx  dr\right\|_{H^s(\Omega)} +\left\|\frac{1}{|\Omega|} \int_{t}^\infty \int_{\Omega}  S(r,x) \, dx  dr  \right\|_{H^s(\Omega)}\notag\\
\le &\left\| u - \bar{u}_0 - \frac{1}{|\Omega|} \int_{0}^t \int_{\Omega}  S(r,x) \, dx  dr  \right\|_2
+ [u]_{s,\Omega}+  \left\|\frac{1}{|\Omega|} \int_{t}^\infty \int_{\Omega}  S(r,x) \, dx  dr  \right\|_2 \notag\\
\leq & \; C M_0\left( 1+\frac{(1-\kappa)M_0^2}{2|\Omega|^2H_0} t \right) ^{-\frac{1}{2}}
+ C \int_{\kappa t}^t [S(r,x)]_{s,\Omega}\, dr \notag\\
&+ |\Omega|^{-\frac{1}{2}}\int_{t}^\infty \int_{\Omega}  S(r,x) \, dx  dr ,    ~~~\forall t>0.
\end{align}
Combining \eqref{E.1} and \eqref{H1.1} yields \eqref{ex3.uK1}.  
Then the proof of the Theorem \ref{ex1} is complete.
$\square$


As a byproduct, we derive the following local  $L^1$-in-time decay estimate for the dissipation functional.
\begin{theorem}\label{D}
	Assume that all conditions of Theorem \ref{ex1} hold, and let $u$ be a non-negative weak solution. Then there exists a constant $C>0$, independent of $t$, such that the dissipation functional $D(u)(t)$  satisfies	
	\begin{align}
	\int_{\frac{t}{2}}^{t}D(u)(r) dr\leq  \frac{C}{t}\int_{\frac{t}{4}}^{\frac{t}{2}}J(u)(r)d r+C\left( \int_{\frac{t}{4}}^{t} [S(r,x)]_{s,\Omega}^2 d r\right)^{\frac{1}{2}}\left( \int_{\frac{t}{4}}^{t}J(u)(r) dr\right)^{\frac{1}{2}},
	\end{align}
	where $D(u)(t) = \int_{\Omega} u^{n} |\partial_{x}(-\Delta)^{s}u|^{2}dx$.
\end{theorem}

\begin{proof} We first choose a cut-off function $\eta \in C^{\infty}(\mathbb{R})$ satisfying 
	$0\leq \eta\leq1,\eta(r)=1$ for $r\geq\frac{t}{2}$, $\eta(r)=0$ for $r\leq\frac{t}{4}$ and $\eta'(r)\leq\frac{C}{t}$ for some constant $C>0.$
	Taking $-\eta (-\Delta)^{s}u$ as a test function, we derive
	\begin{align}\label{add01}
	\int_{0}^{t}\int_{\Omega}\eta(r)u_{r}(-\Delta)^{s}u\,d xd r+\int_{0}^{t} \int_{\Omega} &\eta(r)u^{n} |\partial_{x}(-\Delta)^{s}u|^{2}\,d xd r\nonumber\\
	&=\int_{0}^{t}\int_{\Omega} \eta(r)(-\Delta)^{\frac{s}{2}}S(r,x)(-\Delta)^{\frac{s}{2}}u \,d xd r.
	\end{align}
	Noticing that $\eta(0)=0$ and $\eta(t)=1$, we use the definition of $J(u)(t)$ to compute
	\begin{align}\label{add02}
	0\leq J(u)(t)=\int_{0}^{t}\frac{d }{dr}(\eta(r)J(u)(r))d r
	=\int_{0}^{t}\eta'(r)J(u)(r)d r+2\int_{0}^{t}\int_{\Omega}\eta(r)u_{r}(-\Delta)^{s}u\,d xd r.
	\end{align}		
	Combining \eqref{add01} with \eqref{add02} yields
	\begin{align}\label{add03}
	\int_{\frac{t}{2}}^{t}D(u)(r)d r&\leq \int_{0}^{t}\eta(r)D(u)(r)d r
	\nonumber\\ 
	&\leq \frac{1}{2}\int_{0}^{t}\eta'(r)J(u)(r)d r+\int_{0}^{t}\int_{\Omega} \eta(r)(-\Delta)^{\frac{s}{2}}S(r,x)(-\Delta)^{\frac{s}{2}}u\,d xd r
	\nonumber\\ 
	&\leq \frac{C}{t}\int_{\frac{t}{4}}^{\frac{t}{2}}J(u)(r)d r+C\left( \int_{\frac{t}{4}}^{t} [S(r,x)]_{s,\Omega}^2 d r\right)^{\frac{1}{2}}\left( \int_{\frac{t}{4}}^{t}J(u)(r) d r\right)^{\frac{1}{2}}.
	\end{align}
	Thus, we complete the proof of Theorem \ref {D}.
\end{proof}

\section{Time-and space-independent forces  $S_0$}
In this section, for time-and space-independent forces  $S_0$ case, we obtain stronger results for the long-time behavior of solutions. 
Here, we only consider $S_0 \ge 0$; otherwise, it follows from \eqref{ex1.mass} that
$$u(t, x) \rightarrow 0, \text{ as } t \rightarrow T^* := -\frac{\bar{u}_0 }{S_0},  $$
which means that the thin-film will completely dry out over the finite time  $T^*$.

{\bf Proof of Theorem \ref{ex2}.} Let
$$w(t, x) := u(t, x) - u_\Omega(t), $$
where
\begin{align}\label{uOmega}
u_{\Omega}(t) = \frac{1}{|\Omega|} \int_\Omega u(t,x)\,dx= \bar{u}_0  + tS_0.
\end{align}
Note that  $w = w_\epsilon(t, x)$  solves
\begin{equation}\label{con}
\begin{cases}
w_t + (f_\epsilon(u) \partial_xI(w))_x = 0,~~&(t,x) \in (0,T)\times \Omega,\\
\partial_x w =f_\epsilon(u) \partial_xI(w) = 0,~~&(t,x) \in (0,T) \times\partial \Omega,\\
w(0, x)=w_{0}(x):=u_{0}(x)-\bar{u}_0 ,~~&x \in \Omega.
\end{cases}
\end{equation}
Also, we have
\begin{align*}
\int_{\Omega} w(t, x) \,dx = 0.
\end{align*}
Multiplying \eqref{con} by $ (-\Delta)^s w$  and integrating in $ \Omega $, we obtain
\begin{align}\label{con.equ}
\frac{1}{2} \frac{d}{dt} \int_{\Omega} \left| (-\Delta)^{\frac{s}{2}}w\right| ^2  dx + \int_{\Omega} f_\epsilon(u) \left|\partial_x (-\Delta)^{s}w\right|^2  dx = 0.
\end{align}
The nonnegativity of the second  term on the left-hand side of \eqref{con.equ} indicates
$$[ w ]_{s,\Omega}  \le [w_0 ]_{s,\Omega},  $$
then by embedding theorem and fractional Poincar\'e's  inequality, we know
\begin{align}\label{1111}
\left|u- \bar{u}_0   - tS_0\right|
\le C(\Omega)[u]_{s,\Omega}
\le C(\Omega) [u_{0}]_{s,\Omega},
\end{align}
which implies
\begin{align}\label{con.v}
u(t, x) \ge v(t) := \bar{u}_0  +  tS_0 - C(\Omega)[u_{0}]_{s,\Omega}> 0,
\end{align}
for all
\begin{equation}\label{T0}
t \geq T_0 := \frac{\left[C(\Omega)[u_{0}]_{s,\Omega}- \bar{u}_0  \right] _+}{S_0}.
\end{equation}	
Due to the interpolation inequality, Poincar\'e's  inequality and Young's inequality, we find
\begin{align*}
\|(-\Delta)^{\frac{s}{2}} w\|_2
&\le \|(-\Delta)^{s+\frac{1}{2}} w\|_2^{\frac{s}{2s+1}}  \|w\|_2^{\frac{s+1}{2s+1}}
\le C\|(-\Delta)^{s+\frac{1}{2}} w\|_2^{\frac{s}{2s+1}}  \|(-\Delta)^{\frac{s}{2}} w\|_2^{\frac{s+1}{2s+1}}\nonumber\\
&\le \varepsilon\|(-\Delta)^{\frac{s}{2}} w\|_2
+C\|(-\Delta)^{s+\frac{1}{2}} w\|_2,
\end{align*}
which implies
\begin{align}\label{con.inter}
\|(-\Delta)^{\frac{s}{2}} w\|_2^2
\le C\|(-\Delta)^{s+\frac{1}{2}} w\|_2^2 = C\int_{\Omega}\left|\partial_{x} (-\Delta)^{s} w \right|^2 dx.
\end{align}
Substituting \eqref{con.v} and \eqref{con.inter} into \eqref{con.equ} yields
\begin{align}\label{con.ineq}
\frac{d}{dt} \int_{\Omega} \left| (-\Delta)^{\frac{s}{2}}w\right| ^2  dx
+ Cf_\epsilon(v(t))\int_{\Omega}  \left| (-\Delta)^{\frac{s}{2}}w\right|^2  dx \le 0,  \quad \forall t \ge T_0.
\end{align}   	
Estimates \eqref{1111} and \eqref{con.ineq} shows
\begin{align}\label{con.ineq1}
[ w ]_{s,\Omega}^2
\le[ w(T_0) ]_{s,\Omega}^2 e^{-\int_{T_0}^{t} Cf_\epsilon(v(r) \, dr}
\le [w_{0} ]_{s,\Omega}^2 e^{-\int_{T_0}^{t} Cf_\epsilon(v(r))\,dr}, \quad \forall t > T_0.
\end{align}
Then by \eqref{con.ineq1} and the fractional Poincar\'e's  inequality, we deduce
\begin{align}\label{con.ineq2}
\| w \|_{H^s(\Omega)}^2 \le C [ w_{0} ]_{s,\Omega}^2 e^{-C\int_{T_0}^{t} f_\epsilon(v(r)) \, dr}, \quad \forall t \geq T_0.
\end{align}
Hence, letting  $\epsilon \rightarrow 0$  in \eqref{con.ineq2}, we have
\begin{align*}
\| u-\bar{u}_0-tS_0 \|_{H^s(\Omega)}^2
\le C \| u_{0} \|_{{H}^s(\Omega)}^2 e^{-C\int_{T_0}^{t} \left(\bar{u}_0  - C(\Omega)[u_{0}]_{s,\Omega} +  S_0r \right) ^n  \, dr}, \quad \forall t \geq T_0.
\end{align*}
The proof of the Theorem \ref{ex2} is complete.
\(\square\)

\section{Comments and further problems}

The main contribution of this paper lies in the first derivation of bilateral estimates for the convergence rate of solutions to the nonlocal thin-film equation, and under specific conditions, this convergence rate is optimal.

{\bf Further problems.} Several important questions naturally arise from this study and deserve further investigation:

$\bullet$ Do analogous long-time  behaviors persist when $N = 1$ and $s \in (0, \frac{1}{2}]$, or when $N \ge 2$?  However, for $s \in (0, \frac{1}{2}]$, the lack of the embedding $H^s(\Omega) \hookrightarrow L^{\infty}(\Omega)$ poses a technical obstacle to establishing estimate \eqref{u-u}. Resolving these issues therefore requires novel methods.

$\bullet$ How would the solutions behave under more complex external forces,  e.g.   periodic, impulsive, or stochastic forcing?

$\bullet$ How to establish the interface propagation properties (e.g., finite speed propagation and waiting time phenomenon) of the nonlocal thin film equation with inhomogeneous forces?

~

\section*{Appendix A}


First, we denote $V = H^{2s+1}_N(\Omega)$. It is easy to know that the functional $\Gamma(u)$ is linear on $V$ for any $u \in V$,  and then using the embedding  $V \hookrightarrow L^\infty(\Omega)  $ is continuously embedded in $L^\infty(\Omega)$ and Proposition \ref{forall}, we obtain
$$
|\Gamma(u)(v)| \leq \left[ \|u\|_{H^s(\Omega)} + \tau  \|h\|_\infty\|u\|_{V} \right] \|v\|_{V}.
$$
Thus, The nonlinear operator $\Gamma$ is bounded.

Next, we will prove that the operator $\Gamma$ is coercive. From Proposition  \ref{forall}, we observe
\begin{align*}
\Gamma(u)(u) &\geq -\int_{\Omega} u I(u) \, dx + \left( \int_{\Omega} u \, dx \right)^{2} + c \tau \int_{\Omega} |\partial_x I(u)|^{2} \, dx  \nonumber\\
&\geq \min(1, \tau c) \|u\|^{2}_{H^{2s+1}_{N}(\Omega)}.
\end{align*}
We further deduce
$$
\frac{\Gamma(u)(u)}{\|u\|_V} \to +\infty, \quad \text{as~~}  \|u\|_V \to +\infty.
$$
Therefore, the operator $\Gamma$ is coercive.

Later, we only need to prove that $\Gamma$  is a monotone operator, namely the following Lemma \ref{pse}.
\begin{lemma}\label{pse}
	($\Gamma$ is pseudo-monotone.) Let $u_i $ be a sequence of functions in $ V $ satisfying $ u_i \rightharpoonup u $ weakly in $ V$. Then   	
	$$ \liminf_{i \to +\infty} \Gamma(u_i)(u_i - v) \geq \Gamma(u)(u - v).$$
\end{lemma}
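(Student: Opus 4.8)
The plan is to split $\Gamma(u_i)(u_i-v)=\Gamma(u_i)(u_i)-\Gamma(u_i)(v)$, prove that $\Gamma(u_i)(v)\to\Gamma(u)(v)$ for each fixed $v\in V$, and prove that $\liminf_{i\to\infty}\Gamma(u_i)(u_i)\ge\Gamma(u)(u)$; the desired inequality then follows at once, since $\liminf_i(a_i-b_i)=\liminf_i a_i-\lim_i b_i$ whenever $b_i$ converges. The starting point is compactness: since $u_i\rightharpoonup u$ in $V=H^{2s+1}_N(\Omega)$ the sequence is bounded there, so the compact embeddings of $H^{2s+1}_N(\Omega)$ on the bounded interval $\Omega$ — together with Proposition~\ref{embed}, which applies because $2s+1>\tfrac12$ — yield $u_i\to u$ strongly in $H^{\sigma}_N(\Omega)$ for every $\sigma<2s+1$ and $u_i\to u$ in $C^0(\bar\Omega)$ (the whole sequence, the weak limit being uniquely determined). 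In particular $\|u_i\|_\infty\le M$ for some $M$, and by uniform continuity of $h$ on $[-M,M]$ we get $h(u_i)\to h(u)$ uniformly on $\bar\Omega$ with $h(u)\ge c>0$. Finally, since $I=-(-\Delta)^s$ is a bounded linear operator from $H^{2s+1}_N(\Omega)$ into $H^1_N(\Omega)$ and $u\mapsto\partial_x I(u)$ is bounded linear into $L^2(\Omega)$ (Proposition~\ref{forall}), both are weak-to-weak continuous, so $\partial_x I(u_i)\rightharpoonup\partial_x I(u)$ weakly in $L^2(\Omega)$.

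For fixed $v\in V$ the term $-\int_\Omega u_i I(v)\,dx$ converges to $-\int_\Omega u I(v)\,dx$ by weak $L^2$ convergence of $u_i$ against the fixed function $I(v)$; the mass term converges trivially because $\int_\Omega u_i\,dx\to\int_\Omega u\,dx$; and in $\tau\int_\Omega h(u_i)\,\partial_x I(u_i)\,\partial_x I(v)\,dx$ the factor $h(u_i)\,\partial_x I(v)$ converges strongly in $L^2(\Omega)$ (uniform convergence of $h(u_i)$ times the fixed $L^2$ function $\partial_x I(v)$) while $\partial_x I(u_i)\rightharpoonup\partial_x I(u)$ weakly in $L^2(\Omega)$. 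Pairing strong with weak gives $\Gamma(u_i)(v)\to\Gamma(u)(v)$.

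For the quadratic part we use Proposition~\ref{forall} to write
\begin{equation*}
\Gamma(u_i)(u_i)=[u_i]_{s,\Omega}^2+\frac{1}{|\Omega|}\Big(\int_\Omega u_i\,dx\Big)^2+\tau\int_\Omega h(u_i)\,|\partial_x I(u_i)|^2\,dx .
\end{equation*}
The first two terms converge to $[u]_{s,\Omega}^2$ and $\tfrac{1}{|\Omega|}\big(\int_\Omega u\,dx\big)^2$ by the strong convergence of $u_i$ in $H^s_N(\Omega)$ and in $L^1(\Omega)$. For the third term, write it as $\tau\|\sqrt{h(u_i)}\,\partial_x I(u_i)\|_{L^2(\Omega)}^2$; since $\sqrt{h(u_i)}\to\sqrt{h(u)}$ uniformly and $\partial_x I(u_i)\rightharpoonup\partial_x I(u)$ weakly in $L^2(\Omega)$, the product satisfies $\sqrt{h(u_i)}\,\partial_x I(u_i)\rightharpoonup\sqrt{h(u)}\,\partial_x I(u)$ weakly in $L^2(\Omega)$, and hence the weak lower semicontinuity of the $L^2$-norm gives $\liminf_i\tau\int_\Omega h(u_i)|\partial_x I(u_i)|^2\,dx\ge\tau\int_\Omega h(u)|\partial_x I(u)|^2\,dx$. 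Therefore $\liminf_i\Gamma(u_i)(u_i)\ge\Gamma(u)(u)$, which combined with the previous paragraph completes the proof.

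The only genuinely nonlinear step — and the one I expect to need the most care — is the weighted dissipation term $\tau\int_\Omega h(u_i)|\partial_x I(u_i)|^2\,dx$: one cannot pass to the limit directly, but must couple the \emph{strong} (indeed uniform) convergence of the weight $h(u_i)$ with the \emph{weak} $L^2$ convergence of $\partial_x I(u_i)$ by absorbing $\sqrt{h(u_i)}$ into the quadratic form, after which lower semicontinuity of the Hilbert-space norm applies. Notably the uniform convergence of $h(u_i)$ rests on the compact embedding $H^{2s+1}_N(\Omega)\hookrightarrow C^0(\bar\Omega)$, available precisely because $\Omega$ is one-dimensional, which is what lets us handle a weight $h$ (such as $h=f_\epsilon$) that need not be globally bounded.
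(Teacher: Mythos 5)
Your proposal is correct and follows essentially the same route as the paper: the same decomposition into a bilinear part (handled by pairing strong convergence of the weight and of $u_i$ with weak convergence of $\partial_x I(u_i)$) and a quadratic dissipation term handled by combining the uniform convergence $h(u_i)\to h(u)$, obtained from the compact embedding $V\hookrightarrow L^\infty(\Omega)$, with weak lower semicontinuity of a weighted $L^2$-norm. The only cosmetic differences are that you absorb $\sqrt{h(u_i)}$ into the quadratic form where the paper adds and subtracts $h(u)$, and you get actual convergence of $\|u_i\|_{H^s}^2$ by compactness where the paper settles for weak lower semicontinuity; neither changes the substance.
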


\begin{proof}	
	First, with the aid of Proposition \ref{forall}, we have
	\begin{align}\label{Au}
	\Gamma(u_i)(u_i - v)=  &-\int_\Omega u_iI(u_i - v) \, dx + \frac{1}{|\Omega|}\left( \int_\Omega u_i \, dx \right)\left( \int_\Omega (u_i - v) \, dx \right)  \nonumber\\
	&+ \tau \int_\Omega h(u_i) \partial_x I(u_i) \partial_xI(u_i - v) \, dx  \nonumber\\
	=&\; \|u_i\|^2_{H^s(\Omega)} - \langle u_i, v \rangle_{H^s(\Omega)}  \nonumber\\
	&+ \tau \int_\Omega h(u_i)(\partial_xI(u_i))^2 \, dx - \tau\int_\Omega h(u_i) \partial_x(I(u_i)) \partial_x(Iv),  	
	\end{align}
	where
	$$\langle u, v \rangle_{H^s(\Omega)} = -\int_\Omega uI(v) \, dx +\left( \int_\Omega u\, dx \right)\left( \int_\Omega v \, dx \right) .$$
	Below, we need to ensure convergence in each of these terms.
	Since $ u_i $ converges weakly in $ V $, we can immediately obtain
	$$
	\liminf_{i \to +\infty} \|u_i\|^2_{H^s(\Omega)} \geq \|u\|^2_{H^s(\Omega)},
	$$
	and
	$$
	\lim_{i \to +\infty} \langle u_i, v \rangle_{H^s(\Omega)} = -\langle u, v \rangle_{H^s(\Omega)}.
	$$
	Then, we estimate the third term in the right-hand side of \eqref{Au}
	\begin{align*}
	\int_\Omega h(u_i)(\partial_xI(u_i))^2 \, dx
	&\geq -\|h(u_i) - h(u)\|_{\infty} \|u_i\|_{V}^2 + \int_\Omega h(u) (\partial_xI(u_i))^2 \, dx.
	\end{align*}
	It is not hard to find  that the first term of above inequality approaches zero according to the embedding  $V \hookrightarrow L^\infty(\Omega)  $,  we further obtain by using the lower semicontinuity of the $ L^2 $-norm
	$$
	\lim_{j \to \infty} \int_\Omega h(u_i)(\partial_xI(u_i))^2\geq \int_\Omega h(u) (\partial_xI(u))^2.
	$$
	Finally, we have
	$$
	h(u_i) \partial_x I(v) \to h(u) \partial_x I(v) \, \text{ in } L^2(\Omega) \text{ strongly,}
	$$
	$$
	\partial_x I(u_i) \to \partial_x I(u) \, \text{ in } L^2(\Omega) \text{ weakly,}
	$$
	which implies the convergence of the last term in \eqref{Au} and completes the proof.
\end{proof}

~

~

\noindent {\bf Funding} Bin Guo was partially supported by the National Natural Science Foundation of China (NSFC) (No. 11301211) and the Natural Science Foundation of Jilin Province, China (No. 201500520056JH).

~

\noindent {\bf Data Availability} No datasets were generated or analyzed during the current study.

~

\noindent {\bf Author Contributions} J.H Zhao and B. Guo wrote the main manuscript text and revised the manuscript. All authors reviewed the manuscript.

~

\section*{Declarations}

\noindent {\bf Conflict of interest} The authors declare that they have no conflict of interest.


\bigskip


\begin{thebibliography}{99}
	
	\bibitem{MSBA}\label{MSBA}
	M. S. Agranovich and B. A. Amosov, On Fourier series in eigenfunctions of elliptic boundary value problems, \textit{Georgian Math. J. \textbf{10}(3), (2003), 401--410.}
	
	
	
	\bibitem{BBDP}\label{BBDP}
	E. Beretta, M. Bertsch and R. Dal Passo, Nonnegative solutions of a fourth-order nonlinear degenerate
	parabolic equation, \textit{Arch. Ration. Mech. Anal.  \textbf{129}(2), (1995), 175--200.}
	
	
	
	
	\bibitem{B2}\label{B2}
	F. Bernis, Finite speed of propagation and continuity of the interface for thin viscous flows, \textit{Adv. Differential Equations \textbf{1}(3), (1996),  337--368.}
	
	\bibitem{B3}\label{B3}
	F. Bernis,  Finite speed of propagation for thin viscous flows when $2 \leq n < 3$, \textit{C. R. Acad. Sci. Paris S${\rm \acute{e}}$r. I  Math. \textbf{322}(12), (1996), 1169--1174.}
	
	
	\bibitem{FBAF}\label{FBAF}	
	F. Bernis and A. Friedman, Higher order nonlinear degenerate parabolic equations, \textit{J. Differ. Equations  \textbf{83}(1), (1990), 179--206.}
	
	\bibitem{BP}\label{BP}
	A. L. Bertozzi and M. Pugh, The lubrication approximation for thin viscous films: regularity and long-time	 behavior of weak solutions, \textit{Comm. Pure Appl. Math. \textbf{49}(2), (1996), 85--123.}
	
	
	
	\bibitem{EAS1}\label{EAS1}
	E. A. Carlen and S. Ulusoy, An entropy dissipation-entropy estimate for a thin film type equation,  \textit{Commun.  Math.  Sci. \textbf{3}(2) (2005) 171-178.}
	
	\bibitem{EAS2}\label{EAS2}
	E. A. Carlen and S. Ulusoy, Asymptotic equipartition and longtime behavior of solutions of a thin-film equation, \textit{J. Differential Equations \textbf{241}(2), (2007), 279--292.}
	
	\bibitem{JAG}\label{JAG}
	J. A. Carrillo and G. Toscani, Long-time asymptotics for strong solutions of the thin film equation, \textit{Commun Math.  Phys. \textbf{225}(3), (2002), 551--571.}
	
	
	\bibitem{CMRY}\label{CMRY}
	M. Chugunova,   Y. Ruan and R. M. Taranets,
	On qualitative behaviour of solutions to a thin film equation with a source term,
	\textit{J. Nonlinear Sci. \textbf{31}(1), (2021), 23 pp.}
	
	
	
	\bibitem{DPGG}\label{DPGG}
	R. Dal Passo, H. Garcke and G. Gr${\rm \ddot{u}}$n, On a fourth-order degenerate parabolic equation: global entropy
	estimates, existence, and qualitative behavior of solutions, \textit{SIAM J. Math. Anal. \textbf{29}(2), (1998), 321--342.}
	
	
	\bibitem{NDSL}\label{NDSL}
	N. De Nitti, S. Lisini, A. Segatti and R. M. Taranets,
	Existence and asymptotic behaviour of solutions for a multidimensional fractional thin-film equation,  \textit{arXiv:2404.03633.}
	
	
	\bibitem{NDRMT}\label{NDRMT}
	N. De Nitti and R. M. Taranets,  Interface propagation properties for a nonlocal thin-film equation, \textit{SIAM J. Math. Anal. \textbf{56}(1), (2024), 173--196.}
	
	
	
	\bibitem{EDNG}\label{EDNG}
	E. Di Nezza, G. Palatucci and E. Valdinoci, Hitchhiker's guide to the fractional
	Sobolev spaces, \textit{Bull. Sci. Math. \textbf{136}(5), (2012), 521--573.}
	
	
	
	\bibitem{LGF}\label{LGF}
	L. Giacomelli and F. Otto, Variational formulation for the lubrication approximation of the Hele-Shaw flow, \textit{Calc. Var. Partial Differential Equations \textbf{13}(3), (2001), 377--403.}
	
	
	
	\bibitem{CIAM}\label{CIAM}
	C. Imbert and A. Mellet, Existence of solutions for a higher order non-local equation appearing in crack dynamics, \textit{Nonlinearity \textbf{24}(12), (2011), 3487--3514.}
	
	\bibitem{WJA}\label{WJA}
	W. J${\rm \ddot{a}}$ger and A. Mikeli${\rm \acute{c}}$, On the roughness-induced effective boundary conditions for an incompressible viscous flow, \textit{J. Differential Equations \textbf{170}(1), (2001), 96--122.}
	
	\bibitem{NJ}\label{NJ}
	N. Ju, Numerical analysis of parabolic $p$-Laplacian: approximation of trajectories,
	\textit{SIAM J. Numer. Anal.} \textbf{37}(6), (2000), 1861--1884.
	
	\bibitem{L1}\label{L1}
	J. J. Li, A note on a fourth order degenerate parabolic equation in higher space dimensions, \textit{Math. Ann. \textbf{399}(2), (2007), 251--285.}
	
	\bibitem{L2}\label{L2}
	J. J. Li, On a fourth order degenerate parabolic equation in higher space dimensions, \textit{J. Math. Phys. \textbf{50}(12), (2009), 26 pp.}
	
	\bibitem{JLL}\label{JLL}
	J.-L. Lions, \textit{  Quelques m$\acute{e}$thodes de r$\acute{e}$solution des probl$\grave{e}$mes aux limites non lin$\acute{e}$aires}, Dunod, Gauthier-Villars, Paris, 1969.
	
	\bibitem{LJS}\label{LJS}
	J. L${\rm \acute{o}}$pez, J. Soler and G. Toscani, Time rescaling and asymptotic behavior of some fourth-order degenerate diffusion equations,
	\textit{Comput. Math. Appl.} \textbf{43}(6--7), (2002), 721--736.
	
	\bibitem{AOS}\label{AOS}
	A. Oron, S. H. Davis and S. G. Bankoff, Long-scale evolution of thin liquid films, \textit{Rev. Mod. Phys. \textbf{69}(3), (1997), 931--980.}
	
	\bibitem{DSL}\label{DSL}
	D. Slep${\rm \check{c}}$ev, Linear stability of self-similar solutions of unstable thin-LM equations, \textit{Interfaces Free Bound.  \textbf{11}(3), (2009), 375--398.}
	
	\bibitem{RT}\label{RT}
	R. Tarhini, Study of a family of higher order nonlocal degenerate parabolic equations: from the porous medium equation to the thin film equation, \textit{J. Differential Equations  \textbf{259}(11), (2015), 5782--5812.}
	
	\bibitem{RTE}\label{RTE}
	R. Teman, \textit{Infinite Dimensional Dynamical Systems in Mechanics and Physics}, 2nd ed., Springer-Verlag, New York, 1997.
	
	
	\bibitem{TAL}\label{TAL}
	A. Tudorasku, Lubrication approximation for thin viscous films: asymptotic behavior of non-negative
	solutions, \textit{Comm. Partial Differential Equations \textbf{32}(7), (2007), 1147--1172.}
	
	
	
	
	
	
	
	
	
	
	
\end{thebibliography}
          \end{document}